\theoremstyle{plain}
\newtheorem{theorem}{Theorem}[section]
\newtheorem{corollary}[theorem]{Corollary}
\newtheorem{proposition}[theorem]{Proposition}
\newtheorem{lemma}[theorem]{Lemma}
\newtheorem{question}{Question}
\theoremstyle{definition}
\newtheorem{definition}[theorem]{Definition}
\newtheorem{example}[theorem]{Example}
\theoremstyle{definition}
\newtheorem*{acknow}{Acknowledgements}
\newtheorem*{examples}{Examples}
\def\@cite#1#2{{\m@th\upshape\bfseries%
[{#1\if@tempswa{\m@th\upshape\mdseries, #2}\fi}]}} \makeatother
\newcommand{\bbC}{{\mathbb{C}}}
\newcommand{\bbF}{{\mathbb{F}}}
\newcommand{\bbN}{{\mathbb{N}}}
\newcommand{\bbZ}{{\mathbb{Z}}}
\renewcommand{\H}{{\mathcal{H}}}
\renewcommand{\O}{{\mathcal{O}}}
\newcommand{\R}{{\mathcal{R}}}
\renewcommand{\S}{{\mathcal{S}}}
\newcommand{\T}{{\mathcal{T}}}
\renewcommand{\epsilon}{\varepsilon}
\renewcommand{\phi}{\varphi}
\newcommand{\upchi}{{\raise.35ex\hbox{\ensuremath{\chi}}}}
\def\al{\alpha}
\def\be{\beta}
\def\de{\delta}
\def\ga{\gamma}
\def\eps{\varepsilon}
\newcommand{\foral}{\text{ for all }}
\newcommand{\alg}{\operatorname{alg}}
\newcommand{\id}{{\operatorname{id}}}
\newcommand\ad{\operatorname{ad}}
\newcommand{\ca}{\mathrm{C}^*}
\newcommand{\ssim}{\stackrel{s}{\sim}}
\newcommand{\nor}[1]{\left\Vert #1\right\Vert}
\begin{document}

\title{Isomorphism Invariants for Multivariable $\ca$-Dynamics}

\author[E.T.A. Kakariadis]{Evgenios T.A. Kakariadis}
\address{School of Mathematics and Statistics\\Newcastle University\\Newcastle upon Tyne\\ NE1 7RU\\UK}
\email{evgenios.kakariadis@ncl.ac.uk}

\author[E.G. Katsoulis]{Elias~G.~Katsoulis}
\address{Department of Mathematics \\East Carolina University\\ Greenville, NC 27858-4353\\USA}
\email{katsoulise@ecu.edu}

\thanks{2010 {\it  Mathematics Subject Classification.}
47L65, 47L75, 46L55, 46L40, 46L89}
\thanks{{\it Key words and phrases:} $\ca$-algebra, dynamical system, piecewise conjugacy, Fell spectrum, outer conjugacy}
\thanks{First author partially supported by the Fields Institute for Research in the Mathematical Sciences}

\maketitle

\begin{abstract}
To a given multivariable $\ca$-dynamical system $(A, \al)$ consisting of $*$-automorphisms, we associate a family of operator algebras $\alg(A, \al)$, which includes as specific examples the tensor algebra and the semicrossed product. It is shown that if two such operator algebras $\alg(A, \al)$ and $\alg(B, \be)$ are isometrically isomorphic, then the induced dynamical systems $(\hat{A}, \hat{\al})$ and $(\hat{B}, \hat{\be})$ on the Fell spectra are piecewise conjugate, in the sense of Davidson and Katsoulis.

In the course of proving the above theorem we obtain several results of independent interest. If $\alg(A, \al)$ and $\alg(B, \be)$ are isometrically isomorphic, then the associated correspondences $X_{(A, \al)}$ and $X_{(B, \be)}$ are unitarily equivalent. In particular, the tensor algebras are isometrically isomorphic if and only if the associated correspondences are unitarily equivalent. Furthermore, isomorphism of semicrossed products implies isomorphism of the associated tensor algebras.

In the case of multivariable systems acting on $\ca$-algebras with trivial center, unitary equivalence of the associated correspondences reduces to outer conjugacy of the systems. This provides a complete invariant for isometric isomorphisms between semicrossed products as well.
\end{abstract}

\section{Introduction}

Given a multivariable $\ca$-dynamical system $(A, \al)$, consisting of $n_\al$ $*$-endomorphisms $\al \equiv (\al_1, \al_2, \dots ,\al_{n_{\al}})$ of a $\ca$-algebra $A$, one can associate various non-selfadjoint operator algebras $\alg(A,\al)$ that share certain common properties. Notable examples are the semicrossed product $A \times_\al \bbF_n^+$ and the tensor algebra $\T_{(A,\al)}^+$ of the associated $\ca$-correspondence $X_{(A,\al)}$. In one form or another, algebras of this type have been investigated by various authors over the last forty years \cite{Arv1, Arv, ArvJ, DKak, DK08, DKsimple, DK, DKM, DRS, HadH, MS3, Pet, Pet2, Pow}, beginning with the seminal work of Arveson~\cite{Arv1}. The central theme in these investigations has been to identify to what extend the dynamics of the system affect the classification of the associated operator algebras. This is also the main theme of the present paper.

For a single variable system $(A, \al)$ (when $n_\al=1$), the tensor algebra $\T_{(A,\al)}^+$ coincides with the semicrossed product $A \times_\al \bbZ^+$, better known as Peters' semicrossed product. In a series of papers, Arveson \cite{Arv}, Arveson and Josephson \cite{ArvJ}, Peters \cite{Pet2}, Hadwin and Hoover \cite{HadH} and Power~\cite{Pow} proved that topological conjugacy is a complete invariant for algebraic isomorphism between Peters' semicrossed products, provided that the ambient $\ca$-algebra is commutative and the action satisfies various extra conditions. These extra conditions were removed by Davidson and Katsoulis \cite{DK08} thus proving that topological conjugacy is a complete invariant for algebraic isomorphism between Petrer's semicrossed products over commutative $\ca$-algebras. For more general $\ca$-algebras  Muhly and Solel~\cite{MS} proposed the study of isometric isomorphisms between Peters' semicrossed products (actually between arbitrary tensor algebras) as a more manageable  problem. In \cite{MS}, they established that outer conjugacy is a complete invariant for isometric isomorphisms when the actions are automorphic and their Connes' spectrum is full. In \cite{DKsimple} Davidson and Katsoulis disposed the condition on the spectrum, provided that the ambient $\ca$-algebras are simple. Recently, Davidson and Kakariadis \cite{DKak} established outer conjugacy as a complete invariant for isometric isomorphisms of semicrossed products over arbitrary $\ca$-algebras, provided that the actions are injective and/or onto and in several other cases. The techniques from \cite{DKak} will be of importance to us.

The study of isomorphisms between operator algebras associated with multivariable systems is much more recent. Davidson and Katsoulis \cite[Theorem 3.22]{DK} have shown that piecewise conjugacy is an invariant for algebraic isomorphisms between tensor algebras or semicrossed products associated with multivariable systems over commutative $\ca$-algebras. Piecewise conjugacy turns out to be a complete invariant for algebraic isomorphisms between various classes of tensor algebras \cite[Theorem 3.25]{DK}. The general case however remains an important open problem. In general, the following two problems are open for arbitrary multivariable $\ca$-dynamical systems and are studied here:
\begin{itemize}
\item[(i)] Identify a complete invariant for isomorphisms between operator algebras associated with multivariable systems.
\item[(ii)] Develop a notion of piecewise conjugacy for multivariable systems that is an isomorphism invariant between the associated operator algebras.
\end{itemize}
Both problems are addressed here exclusively for isometric isomorphisms and (mostly for) automorphic systems. Theorem~\ref{T:iso gives mor} shows that the unitary equivalence of the correspondences $X_{(A, \al)}$ and $X_{(B, \be)}$ is an invariant for isomorphisms between any algebras of the form $\alg(A, \al)$ and $\alg(B, \be)$ considered here. In particular, the tensor algebras of automorphic systems are isometrically isomorphic if and only if the associated correspondences are unitarily equivalent, thus giving an answer to problem (i) in that case. The same answer is also obtained for non-automorphic systems, provided that the ambient $\ca$-algebras are stably finite (Theorem~\ref{T:stably}). The situation is much clearer for automorphic multivariable systems over $\ca$-algebras with trivial center. In that case, Theorem~\ref{trvcent} shows that two semicrossed products (or tensor algebras) are isometrically isomorphic if and only if the corresponding multivariable systems are outer conjugate.  

Any automorphic multivariable system $(A, \al)$ induces a homeomorphic multivariable dynamical system $(\hat{A}, \hat{\al})$ on the Fell spectrum. The latter is a multivariable system of maps acting on a locally compact space and so the concept of piecewise conjugacy from \cite{DK} is meaningful here. In Theorem~\ref{mainthm} we show that the existence of an isometric isomorphism between $\alg(A, \al)$ and $\alg(B, \be)$ implies that the induced dynamical systems  $(\hat{A}, \hat{\al})$ and $(\hat{B}, \hat{\be})$ on the Fell spectra are piecewise conjugate. This gives an answer to problem (ii).

In the final section of the paper, we present two multivariable systems $(A, \al )$ and $(B, \be)$, consisting of a different number $*$-monomorphisms that have isomorphic tensor algebras. Since both $A$ and $B$ equal the Cuntz algebra \cite{Cun} with two generators, this example shows that many of our results (in particular the ones implying $n_\al=n_\be$) do not extend beyond automorphic (actually $*$-epimorphic) systems without making any further assumptions.

In conclusion, we mention the recent number theoretic papers of Cornelissen and Marcolli \cite{C,CM} and their work in graph theory \cite{CM2}. In these papers Cornelissen and Marcolli make essential use of the result of Davidson and Katsoulis~\cite{DK} that piecewise conjugacy is an invariant for isomorphisms between non-selfadjoint operator algebras of classical systems (cf. \cite[Theorem 2]{C}, \cite[Section 6]{CM}, and \cite[Theorem 1.5]{CM2}, the proof of \cite[Theorem 1.6]{CM2}). We hope that the results of this paper will pave the way for further interactions between these important areas of current research.

\section{Preliminaries}\label{S:preliminaries}

A $\ca$-dynamical system $(A, \al)$ consists of a $\ca$-algebra $A$ and a $*$- endomorphism $\al$. (In the sequel all $\ca$-algebras and their $*$-homomorphisms are assumed to be unital.) A multivariable $\ca$-dynamical system  $(A,\al)$ (or simply, a multivariable system) consists of a $\ca$-algebra $A$ and $*$-endomorphisms $\al = (\al_1, \al_2, \dots , \al_{n_{\al}})$ of $A$. If the $\al_1, \al_2, \dots, \al_{n_{\al}}$ happen to be automorphisms, then $(A,\al)$ is said to be an automorphic multivariable system.

We denote by $\T^+(A,\al)$ the tensor algebra of the $\ca$-correspondence $\oplus_{i=1}^{n_{\al}} A_{\al_i}$. (The tensor algebras for $\ca$-correspondences were introduced in \cite{MS, MS2}. The correspondence $\oplus_{i=1}^{n_{\al}} A_{\al_i}$ has been studied in \cite{DK, KK}.)
There is also a related operator algebra, the semicrossed product $A\times_{\al} \bbF_{n_{\al}}^{+}$ associated with $(A,\al)$, where $ \bbF_{n_{\al}}^{+}$ denotes the free semigroup with $n_{\al}$ generators. This is the universal operator algebra generated by a copy of $A$ and contractions $s_1, s_2, \dots s_{n_{\al}}$ satisfying $as_i=s_i \al_i(a)$, $a \in A$, $i=1, 2, \dots, n_{\al}$.

The algebras $\T^+(A,\al)$ and $A\times_{\al} \bbF_{n_{\al}}^{+}$ are not isomorphic in general \cite[Corollary 3.11]{DK} but they do share some common properties which are listed below. As it turns out, there are other operator algebras satisfying these properties and so we take an axiomatic approach in describing them.

\begin{definition} \label{alg}
Let $(A, \al)$ be a  multivariable $\ca$-dynamical system. An operator algebra $\alg (A, \al)$ is said to be \textit{associated with the multivariable system} $(A, \al)$ if it satisfies the following conditions:
\begin{itemize}
\item[(i)] There exists an idempotent mapping $E_0\colon \alg (A, \al) \rightarrow \alg (A, \al)$ with $E_0 (\alg (A, \al) ) =\alg (A, \al)  \cap \alg (A, \al) ^* \simeq A$.
\item[(ii)] There exist elements $s_1, s_2, \dots, s_{n_{\al}} \in \alg (A, \al) $, which are not right divisors of $0$, and satisfy the \textit{covariance relations} $as_i=s_i \al_i(a)$, for all $a \in A$, $i=1,2, \dots, n_{\al}$.
\item[(iii)] $\alg (A, \al)$ is generated as a Banach space by monomials of the form $s_{i_1}s_{i_2}\dots s_{i_k}a$, where $a \in A$, $k \in \bbN$ and $1\leq i_l \leq n_{\al}$, for all $l=1,2,\dots, i_k$ .
\item[(iv)] For each $ 1\leq i \leq n_{\al}$ there exist a bounded idempotent mapping $F_i\colon  \alg (A, \al) \rightarrow \alg (A, \al)$, $ 1\leq i \leq n_{\al}$, which annihilates all monomials except from the ones of the form $s_i a$, $a \in A$, which are left invariant.
\end{itemize}
\end{definition}

Conditions (ii) and (iii) are immediate for both $\T^+(A,\al)$ and $A \times_{\al} \bbF^{+}_{n}$. The verification of conditions (i) and (iv) depends on an argument involving expectations and the Fejer kernel. This argument is by now routine in the non-selfadjoint literature and we omit it (see for example \cite[Section 3.1]{DK}).

We have not opted for maximum generality in the above definition. (That perhaps should be investigated elsewhere.) Instead, we list the minimum requirement so that our theory reaches beyond the tensor algebras or the semicrossed products and includes certain examples that have already appeared in the literature.

\begin{examples}
Let $(A,\al)$ be a multivariable system consisting of mutually commuting $*$-endomorphisms $\al_1, \al_2, \dots , \al_{n}$. Let  $A \times_{\al} \bbZ^{+}_{n}$ denote the universal operator algebra generated by a copy of $A$ and \textit{commuting} contractions $s_1, s_2, \dots, s_{n}$ satisfying the covariance relations in Definition~\ref{alg}\,(ii). It is routine to verify that $A \times_{\al} \bbZ^{+}_{n}$ satisfies the requirements of Definition~\ref{alg} and therefore $A \times_{\al} \bbZ^{+}_{n}$ is an example of an operator algebra associated with $(A, \al)$. Algebras of this type were studied in \cite{DKM, DunPet10, Pow}. If one further asks that the generators  $s_1, s_2, \dots, s_{n}$ are doubly commuting, then we obtain the Nica-covariant semicrossed product studied in \cite{Ful11}.

Alternatively, one may ask for the universal operator algebra generated by a copy of $A$ and a row contraction $(s_1, s_2, \dots, s_{n}) $ consisting of commuting operators and satisfying the covariance relations in Definition~\ref{alg}\,(ii). Note that in that case, the generators $s_1, s_2, \dots, s_{n}$ are not partial isometries but just contractions. In the case where $A=\bbC$, this is the classical Drury-Arveson space studied in \cite{ArP, Arv, DaP} and elsewhere. Additional  examples can be formed by using as a prototype the operator algebras of~\cite{DRS} related to analytic varieties.
\end{examples}

\section{Invertibility of matrices over $\ca$-algebras}

This section contains a technical result (Theorem~\ref{leftinvert}), which may be of independent interest. It shows that if a right invertible rectangular matrix $[b_{ij}]$ over a $\ca$-algebra $B$ satisfies a natural intertwining condition, then $[b_{ij}]$ is actually square and invertible. Just the fact that the matrix has to be square, will allow us to conclude that multivariable dynamical systems with isomorphic operator algebras have necessarily the same dimension. The proof of Theorem~\ref{leftinvert} is algorithmic in nature and this is used in the proof of Theorem~\ref{mainthm}.

\begin{lemma} \label{L:key}
Let $B$, $C$ be $\ca$-algebras and $\phi$, $\psi$ be representations of $B$ onto $C$. Assume that $C$ has trivial center. If $c\in C$ satisfies
\begin{equation} \label{Eq:key}
\phi(b) c = c \psi(b), \foral b\in B,
\end{equation}
 then either $c$ is invertible or $c=0$.
\end{lemma}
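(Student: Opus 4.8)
The plan is to show that $c^*c$ and $cc^*$ are each scalar multiples of the identity of $C$, and then to read off the dichotomy directly; no passage to a von Neumann completion or polar decomposition is needed.

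First I would take adjoints in \eqref{Eq:key}. Since $\phi$ and $\psi$ are $*$-homomorphisms, \eqref{Eq:key} is equivalent to $\psi(b)c^* = c^*\phi(b)$ for all $b \in B$. Combining the two identities, for every $b \in B$,
\[
(c^*c)\,\psi(b) = c^*\bigl(\phi(b)c\bigr) = \bigl(c^*\phi(b)\bigr)c = \psi(b)\,(c^*c),
\]
and symmetrically $(cc^*)\,\phi(b) = \phi(b)\,(cc^*)$. Thus $c^*c$ commutes with $\psi(B)$ and $cc^*$ commutes with $\phi(B)$.

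Here is where surjectivity enters: since $\phi$ and $\psi$ are onto, $\psi(B) = \phi(B) = C$, so $c^*c$ and $cc^*$ both lie in the center of $C$, which by hypothesis is $\bbC 1$. As these elements are positive, there are scalars $\la,\mu \geq 0$ with $c^*c = \la 1$ and $cc^* = \mu 1$. If $\la = 0$ then $\|c\|^2 = \|c^*c\| = 0$, so $c = 0$. If $\la > 0$, then $\mu = \|cc^*\| = \|c\|^2 = \|c^*c\| = \la$, and the element $d := \la^{-1}c^*$ satisfies $dc = \la^{-1}c^*c = 1$ and $cd = \la^{-1}cc^* = 1$; hence $c$ is invertible, with $c^{-1} = \la^{-1}c^*$.

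I do not anticipate a genuine obstacle: the only subtle point is that the intertwining relation by itself would merely place $c^*c$ and $cc^*$ in the relative commutants of $\psi(B)$ and $\phi(B)$, so it is essential to use that both representations are \emph{onto} in order to land in the center of $C$ and invoke the triviality hypothesis.
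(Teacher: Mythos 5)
Your argument is correct and is essentially the paper's own proof: take adjoints to see that $cc^*\in\phi(B)'=C'$ and $c^*c\in\psi(B)'=C'$, use triviality of the center to make both positive scalars, and conclude that a nonzero $c$ is a multiple of a unitary. The only difference is that you write out the inverse $\la^{-1}c^*$ explicitly, which the paper leaves implicit.
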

\begin{proof}
By taking adjoints in (\ref{Eq:key}), we have $c^*\phi(b)  =  \psi(b) c^*$, for all $b\in B$. Therefore, $cc^* \in \phi(B)'=C'$. By a similar argument $c^*c\in \psi(B)'=C'$. If $c\neq0$, then both $cc^*$ and $c^* c$ are non-zero scalars. This implies that $c$ is a non-zero multiple of a unitary, hence invertible.
\end{proof}

\begin{lemma} \label{ERO}
Let $B$, $C$ be $\ca$-algebras and $\{\phi_i\}_{i=1}^m$, $\{\psi_j\}_{j=1}^n$ be families of representations of $B$ onto $C$ and let $[c_{ij}] \in M_{m,n}(C)$ which intertwines the representations $\{\phi_i\}_{i=1}^m$ and $\{\psi_j\}_{j=1}^n$, i.e.,
\[
\phi_i (b) c_{ij} = c_{ij} \psi_{j}(b),
\]
for all $i=1, 2, \dots m$, $j=1,2, \dots, n$ and $b \in B$.
\begin{itemize}
\item[(i)] If $F_{\pi}$ is the unitary matrix corresponding to a permutation $\pi \in S_n$,  then $[c_{ij}] F_{\pi}$ intertwines the representations $\{\phi_i\}_{i=1}^m$ and $\{\psi_{\pi{(j)}}\}_{j=1}^n$.
\item[(ii)] If $c_{kk}$ is invertible and $E_{hk}$ is the matrix corresponding to the elementary row operation that adds  the $k$-th row multiplied by $-c_{hk}c_{kk}^{-1}$ to the $h$-th row, then $E_{hk}[c_{ij}] $ intertwines the representations  $\{\phi_i\}_{i=1}^m$ and $\{\psi_j\}_{j=1}^n$.
\end{itemize}
\end{lemma}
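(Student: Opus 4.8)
The plan is to verify both assertions by a direct entrywise computation, invoking only the intertwining identity $\phi_i(b)c_{ij}=c_{ij}\psi_j(b)$ together with, for part (ii), the rearranged form of that identity which the invertibility of $c_{kk}$ makes available.

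For (i) I would adopt the convention $(F_{\pi})_{jl}=\delta_{j,\pi(l)}$, so that the $(i,l)$ entry of $[c_{ij}]F_{\pi}$ equals $c_{i,\pi(l)}$. Specializing the hypothesis to $j=\pi(l)$ gives $\phi_i(b)c_{i,\pi(l)}=c_{i,\pi(l)}\psi_{\pi(l)}(b)$ for every $b\in B$, which is precisely the statement that $[c_{ij}]F_{\pi}$ intertwines $\{\phi_i\}_{i=1}^m$ and $\{\psi_{\pi(j)}\}_{j=1}^n$. This part is pure bookkeeping and presents no difficulty.

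For (ii) the first step is to note that $\phi_k(b)c_{kk}=c_{kk}\psi_k(b)$ yields, after multiplying by $c_{kk}^{-1}$ on both sides, the identity $\psi_k(b)c_{kk}^{-1}=c_{kk}^{-1}\phi_k(b)$ for all $b\in B$. The matrix $E_{hk}[c_{ij}]$ leaves every row other than the $h$-th unchanged, while its $h$-th row has entries $c_{hj}-c_{hk}c_{kk}^{-1}c_{kj}$. Since $\phi_h(b)c_{hj}=c_{hj}\psi_j(b)$ by hypothesis, it suffices to check that the correction term satisfies the same intertwining relation, and indeed
\[
\phi_h(b)\,c_{hk}c_{kk}^{-1}c_{kj}=c_{hk}\psi_k(b)c_{kk}^{-1}c_{kj}=c_{hk}c_{kk}^{-1}\phi_k(b)c_{kj}=c_{hk}c_{kk}^{-1}c_{kj}\psi_j(b),
\]
where the three equalities use, in order, the intertwining of the $h$-th row (with $j=k$), the rearranged identity for the $k$-th row, and the intertwining of the $k$-th row. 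Subtracting, the $h$-th row of $E_{hk}[c_{ij}]$ intertwines $\phi_h$ and $\psi_j$, whence $E_{hk}[c_{ij}]$ intertwines $\{\phi_i\}_{i=1}^m$ and $\{\psi_j\}_{j=1}^n$.

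The only point requiring any care — and it is a mild one — is that in (ii) the multiplier $-c_{hk}c_{kk}^{-1}$ is applied on the \emph{left} of the $k$-th row, so the computation must be routed through $\psi_k(b)c_{kk}^{-1}=c_{kk}^{-1}\phi_k(b)$ in order to move $\phi_k(b)$ past $c_{kk}^{-1}$; there is no genuine obstacle here, and the whole lemma is a preparation for iterating these two elementary operations in the proof of Theorem~\ref{leftinvert}.
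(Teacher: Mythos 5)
Your proof is correct and follows essentially the same route as the paper: part (i) is the same bookkeeping, and in part (ii) you use exactly the paper's chain of identities (intertwining of the $h$-th row at $j=k$, the rearranged relation $\psi_k(b)c_{kk}^{-1}=c_{kk}^{-1}\phi_k(b)$, and intertwining of the $k$-th row), merely written from the $\phi_h(b)$ side instead of the $\psi_j(b)$ side. No gaps.
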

\begin{proof}
The proof of (i) is straightforward. For proving (ii), we only need to examine elements on the  $h$-th row of $E_{hk} [c_{ij}] $.
Since $[c_{ij} ]$ intertwines the representations $\{\phi_i\}_{i=1}^m$ and $\{\psi_j\}_{j=1}^n$, we have
\begin{align*}
(c_{hj} - c_{hk}c_{kk}^{-1}c_{kj})\psi_j (b) & =
\phi_h (b) c_{hj} - c_{hk}c_{kk}^{-1}\phi_k (b) c_{kj}\\
& =\phi_h (b)  c_{hj} - c_{hk}\psi_k (b) c_{kk}^{-1}c_{kj}\\
& =
\phi_h (b)  c_{hj} - \phi_h (b) c_{hk}c_{kk}^{-1} c_{kj}\\
& =\phi_h (b) (c_{hj} - c_{hk}c_{kk}^{-1}c_{kj}),
\end{align*}
for all $b \in B$ and $i = 1, 2, \dots , m$, as desired.
\end{proof}

\begin{lemma}[Gaussian Elimination] \label{L:quasisim}
Let $B$, $C$ be $\ca$-algebras and $\{\phi_i\}_{i=1}^m$, $\{\psi_j\}_{j=1}^n$ be families of representations of $B$ onto $C$, with $m \geq n$. Let $[c_{ij}] \in M_{m,n}(C)$ which intertwines the representations $\{\phi_i\}_{i=1}^m$ and $\{\psi_j\}_{j=1}^n$. If $[c_{ij}] $ is right invertible and $C$ has trivial center, then $m=n$ and $[c_{ij}] $ is quasisimilar\,\footnote{Note that in our context, a quasi similarity is implemented by invertible operators.} in $M_n (C)$ to a diagonal invertible matrix.
\end{lemma}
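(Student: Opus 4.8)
The plan is to run Gaussian elimination on $[c_{ij}]$, the decisive fact being that \emph{every entry is either $0$ or invertible}: each $c_{ij}$ intertwines the representations $\phi_i$ and $\psi_j$ of $B$ onto the trivial‑center algebra $C$, so Lemma~\ref{L:key} applies entrywise. I would argue by induction on $n\geq 0$, with $m$ unconstrained apart from $m\geq n$; the case $n=0$ is vacuous, since an $m\times 0$ right invertible matrix forces $m=0$.

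For the inductive step I would first rule out that the first column of $[c_{ij}]$ is identically zero. If it were, deleting it would leave an $m\times(n-1)$ matrix that still intertwines $\{\phi_i\}_{i=1}^m$ with $\{\psi_j\}_{j=2}^n$ and is still right invertible (the $j=1$ terms drop out of $[c_{ij}][d_{jk}]=I_m$), so the inductive hypothesis — applicable since $m\geq n-1$ — would give $m=n-1$, contradicting $m\geq n$. Hence some entry $c_{k1}$ is invertible. A permutation of rows (which only relabels the family $\{\phi_i\}$) brings it to position $(1,1)$, and then the operations $E_{h1}$ of Lemma~\ref{ERO}(ii), $h=2,\dots,m$, clear the rest of the first column, preserving both the intertwining relations and (being multiplication by invertible matrices) right invertibility. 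Next I would clear the rest of the first row by the symmetric move: adding $-c_{11}^{-1}c_{1j}$ times column $1$ to column $j$ for $j\geq 2$. The computation in Lemma~\ref{ERO} shows this is legitimate as soon as the multiplier intertwines $\psi_1$ with $\psi_j$, and this holds because $\phi_1(b)c_{11}=c_{11}\psi_1(b)$ gives $\psi_1(b)=c_{11}^{-1}\phi_1(b)c_{11}$, whence $\psi_1(b)\,c_{11}^{-1}c_{1j}=c_{11}^{-1}\phi_1(b)c_{1j}=c_{11}^{-1}c_{1j}\,\psi_j(b)$. After these moves the matrix has become $\diag(c_{11},Q)$ for some $Q\in M_{m-1,n-1}(C)$.

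Now $Q$ intertwines the corresponding sub‑families of representations onto $C$, and since $\diag(c_{11},Q)$ is block diagonal with invertible $(1,1)$‑block, any right inverse of it restricts to a right inverse of $Q$; so $Q$ is right invertible. As $m-1\geq n-1$, the inductive hypothesis gives $m-1=n-1$, hence $m=n$, together with a diagonal invertible $\Delta'\in M_{n-1}(C)$ to which $Q$ is quasisimilar. Then $\diag(c_{11},\Delta')$ is diagonal and invertible and quasisimilar to $\diag(c_{11},Q)$, which in turn was obtained from $[c_{ij}]$ by finitely many invertible row operations, invertible column operations, and a permutation; since each is implemented by an invertible matrix over $C$, $[c_{ij}]$ is quasisimilar in $M_n(C)$ to the diagonal invertible matrix $\diag(c_{11},\Delta')$, closing the induction.

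The point that will take the most care is the interplay between the two reductions and the families being intertwined: clearing a column by row operations is Lemma~\ref{ERO}(ii) verbatim, but clearing a row by column operations requires the observation above that $c_{11}^{-1}c_{1j}$ intertwines $\psi_1$ and $\psi_j$ — which is precisely where invertibility of the pivot, hence Lemma~\ref{L:key}, is used. The only other nonroutine ingredient is that right invertibility descends to the complementary block $Q$; once that is secured, the quasisimilarity assertion propagates formally through the invertible matrices implementing the reduction.
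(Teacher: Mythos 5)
Your argument is correct and follows essentially the same route as the paper: Gaussian elimination driven by the zero-or-invertible dichotomy of Lemma~\ref{L:key} and the intertwining-preservation of Lemma~\ref{ERO}. The only cosmetic differences are that you organize the elimination as an induction on $n$, pivot in the first column rather than the first row, and clear the first row by column operations (correctly verifying that $c_{11}^{-1}c_{1j}$ intertwines $\psi_1$ and $\psi_j$), whereas the paper uses only column permutations and row operations to reach an upper-triangular form and then diagonalizes in a second pass.
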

\begin{proof}
We will first produce invertible matrices $E' \in M_m(C)$ and $F' \in M_n( C)$ so that the matrix $E'[c_{ij}] F'$ has invertible diagonal entries and all entries below the diagonal equal to $0$. Once this is done, $m = n$, or otherwise $E'[c_{ij}] F'$ would have a zero row, a contradiction to the right invertibility of $[c_{ij}] $. We do this by using a variant of the Gaussian elimination on $[c_{ij}] $.

Start with the first column. Since $[c_{ij}] $ is right invertible, there exists at least one entry on the first row, say $c_{1j_{1}}$, which is non-zero. By Lemma~\ref{L:key}, $c_{1j_{1}}$ is invertible. Let $F_{(1\, j_{1})}$ be as in Lemma~\ref{ERO}, where $(1\, j_{1})$ is the transposition between $1$ and $j_1$. If the $(i,1)$-entry of $[c_{ij}] F_{(1\, j_{1})}$ is not zero, then let $E_{i1}$ be as in Lemma~\ref{L:key}\,(ii), but for the matrix  $[c_{ij}] F_{(1\, j_{1})}$. Otherwise, set $E_{i1}=I$. Then the matrix
\begin{equation} \label{quasisim}
\left( \prod_{i=1}^{m} \, E_{i1} \right) \left[ c_{ij} \right] F_{(1\, j_{1})}
\end{equation}
has its $(1,1)$-entry invertible and all entries below the $(1, 1)$-entry equal to $0$. Furthermore (\ref{quasisim}) is right invertible and by Lemma~\ref{ERO}, it intertwines the representations $\{\phi_i\}_{i=1}^m$ and $\{\psi_{\pi{(j)}}\}_{j=1}^n$, where $\pi=(1 \, j_1)$. Hence we can continue the Gaussian elimination with the second column of (\ref{quasisim}) this time. One of the entries on the second row, say the $(2, j_2)$-entry will be non-zero, and hence by Lemma~\ref{L:key} invertible. Multiply (\ref{quasisim}) from the right by $F_{(2 \, j_{2})}$ and from the left by invertible matrices $ E_{i2} $, coming from Lemma~\ref{ERO}, in order to zero all entries on the second column which are below the diagonal. Continuing in this fashion, we eventually produce the desired upper triangular matrix $E'[c_{ij}] F'$.

Since the diagonal entries of $E'[c_{ij}] F'$ are invertible, an elementary application of the Gaussian elimination produces an invertible matrix $E''$ so that $E''E'[c_{ij}] F'$ is diagonal and the conclusion follows.
\end{proof}

\begin{theorem} \label{leftinvert}
Let $B$ be a $\ca$-algebra and $\{ \be_i \}_{i=1}^{m}$, $\{ \be'_j \}_{j=1}^{n}$  be families of $*$-epimorphisms of $B$, with $m \geq n$. Let $[b_{ij}] \in M_{m,n}(B)$ which intertwines $\{ \be_i \}_{i=1}^{m}$ and $\{ \be'_j \}_{j=1}^{n}$.
\begin{itemize}
\item[(i)] If $[b_{ij}]$ is right invertible, then $m=n$ and $[b_{ij}]$ is invertible in $M_n (B)$.
\item[(ii)] If $[b_{ij}]$ is right invertible and $B$ has trivial center, then $[b_{ij}]$ is quasisimilar to a diagonal matrix which intertwines $\{ \be_i \}_{i=1}^{n}$ and \break $\{ \be'_{\pi(i)} \}_{i=1}^{n}$, for some permutation $\pi \in S_n$.
\end{itemize}
\end{theorem}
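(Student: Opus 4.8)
The plan is to deduce both parts from the Gaussian elimination of Lemma~\ref{L:quasisim}, passing to irreducible representations of $B$ when its center is not trivial. Part~(ii) is closest to Lemma~\ref{L:quasisim}: assuming $B$ has trivial center, apply Lemma~\ref{L:quasisim} with $C = B$ and $\phi_i = \be_i$, $\psi_j = \be'_j$, which are representations of $B$ \emph{onto} $B$ precisely because each $\be_i$, $\be'_j$ is a $*$-epimorphism; this yields $m = n$ together with invertible matrices $E, F \in M_n(B)$ such that $D := E[b_{ij}]F$ is diagonal and invertible. It then remains to identify the representations intertwined by $D$. Inspecting the proof of Lemma~\ref{L:quasisim}, $E$ is a product of elementary-row-operation matrices as in Lemma~\ref{ERO}(ii) and $F$ is a product of transposition matrices $F_{(k\,j_k)}$ as in Lemma~\ref{ERO}(i); iterating Lemma~\ref{ERO} shows that $D$ intertwines $\{\be_i\}_{i=1}^n$ and $\{\be'_{\pi(j)}\}_{j=1}^n$, where $\pi\in S_n$ is the product of the transpositions $(k\,j_k)$. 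Since $[b_{ij}] = E^{-1}DF^{-1}$ with $E^{-1}, F^{-1}$ invertible, this is the asserted quasisimilarity, which in particular shows $[b_{ij}]$ is invertible in $M_n(B)$.

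For part~(i) we cannot apply Lemma~\ref{L:quasisim} over $B$ directly, as $B$ need not have trivial center. Instead fix an irreducible representation $\sigma$ of $B$ on $H_\sigma$ and apply $\sigma$ entrywise. Then $[\sigma(b_{ij})] \in M_{m,n}(\sigma(B))$ is right invertible (apply $\sigma$ entrywise to a right inverse of $[b_{ij}]$) and intertwines the families $\{\sigma\circ\be_i\}_{i=1}^m$ and $\{\sigma\circ\be'_j\}_{j=1}^n$; because the $\be_i$, $\be'_j$ are surjective, these are representations of $B$ onto $C := \sigma(B)$, and $\sigma(B)$ is a unital $\ca$-algebra (the range of a $*$-homomorphism of $\ca$-algebras is closed) with trivial center (by irreducibility $\sigma(B)' = \bbC 1$, so $\sigma(B) \cap \sigma(B)' = \bbC 1$). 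Hence Lemma~\ref{L:quasisim} applies, giving both $m = n$ and that $[\sigma(b_{ij})]$ is invertible in $M_n(\sigma(B))$ --- in particular injective as an operator on $H_\sigma^{(n)}$ --- for every irreducible representation $\sigma$ of $B$.

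It remains to upgrade invertibility in every irreducible representation to invertibility in $M_n(B)$. With $[b_{ij}]$ now square and right invertible, choose $R \in M_n(B)$ with $[b_{ij}]R = I_n$; then $p := R[b_{ij}]$ is an idempotent and $[b_{ij}](I_n - p) = [b_{ij}] - [b_{ij}]R[b_{ij}] = 0$. If $[b_{ij}]$ were not invertible then $R$ would not be a left inverse, so $I_n - p \neq 0$; writing $I_n - p = [q_{ij}]$, some irreducible representation $\sigma$ of $B$ satisfies $[\sigma(q_{ij})] \neq 0$, since every irreducible representation of $M_n(B)$ has the form $[a_{ij}]\mapsto[\sigma(a_{ij})]$ for an irreducible representation $\sigma$ of $B$ and these separate the points of $M_n(B)$. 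But then $[\sigma(b_{ij})]\,[\sigma(q_{ij})] = 0$ with $[\sigma(q_{ij})] \neq 0$, contradicting the injectivity of $[\sigma(b_{ij})]$ on $H_\sigma^{(n)}$. Hence $[b_{ij}]$ is invertible in $M_n(B)$, completing part~(i). I expect this last upgrade to be the main obstacle: a square right-invertible matrix over a general $\ca$-algebra need not be invertible, and the argument must simultaneously use that the obstructing idempotent $I_n - p$ survives in some irreducible representation and that Lemma~\ref{L:quasisim} makes $[b_{ij}]$ invertible there --- the latter relying crucially on the epimorphism hypothesis (so that the intertwined representations remain surjective) and on irreducible images having trivial center.
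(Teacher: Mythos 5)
Your proposal is correct and follows essentially the same route as the paper: part (ii) is read off from Lemma~\ref{L:quasisim} together with the bookkeeping of Lemma~\ref{ERO}, and part (i) is obtained by applying Lemma~\ref{L:quasisim} to the images under irreducible representations (whose ranges have trivial center) and then using that irreducible representations separate the points of $B$ to pass from invertibility of every $[\sigma(b_{ij})]$ to invertibility in $M_n(B)$. The only cosmetic difference is in that last step, where the paper assembles a faithful representation $\bigoplus_s \rho_s^{(n)}$ of $M_n(B)$ in which the right inverse is seen to be two-sided, while you argue by contradiction with the defect idempotent $I_n - R[b_{ij}]$; the two arguments are interchangeable.
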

\begin{proof}
We have already proved (ii). For (i), let $[d_{ij}] \in M_{m,n}(B)$ be the right inverse of $[b_{ij}]$. If $\rho$ is any irreducible representation of $B$, then  $[\rho(b_{ij})]$ intertwines the representations $\{ \rho \be_i \}_{i=1}^{m}$ and $\{ \rho \be'_j \}_{j=1}^{n}$. Hence Lemma~\ref{L:quasisim} implies that $m=n$ and also that $\rho^{(n)}([d_{ij}]) \equiv [\rho(d_{ij})]$ is the inverse for $[\rho(b_{ij})]$.

Let $( \rho_s )_s$ be a family of irreducible representations of $B$ that separates the points. By \cite[Theorem 6.5.1]{Murphy}, $\id_n \otimes (\oplus_s \rho_s)$ is a faithful representation of $M_n(\bbC) \otimes B$ and so
\[
\bigoplus_s \rho_s^{(n)} \simeq \bigoplus_s (\id_n \otimes \rho_s)
\]
is a faithful representation for $M_n (B)$. In that representation, the previous paragraph shows that $[d_{ij}]$ is the inverse of $[b_{ij}]$.
\end{proof}

\section{The main results}

Let $\alg(A,\al)$ and $\alg (B, \be)$ be operator algebras associated with the multivariable systems $(A,\al)$ and $(B, \be)$ respectively, and let $\ga\colon \alg(A,\al) \rightarrow \alg (B, \be)$ be an isometric isomorphism. Since $\ga$ is isometric, a similar argument as in \cite[Proposition~2]{DKsimple} implies that $\ga|_A$ is a $*$-monomorphism that maps $A$ onto $B$. We will be denoting $\ga|_A$ by $\ga$ as well.

Let $s_1 , s_2, \dots, s_{ n_{\al}}$ and $t_1, t_2, ,\dots, t_{n_{\be}}$ be the generators in $\alg (A,\al)$ and $\alg (B,\be)$ respectively, and let $b_{ij} \equiv F_i(s_j)$ so that
\[
\ga(s_j)= b_{0j} + t_1 b_{1j} + t_2 b_{2j} + \cdots + t_{n_{\be}} b_{ n_{\be} j} + Y
\]
with $E_0(Y)=F_1(Y)= \cdots = F_{n_\be}(Y)=0$.
Since $\ga$ is a homomorphism,
\begin{align*}
\ga(a)\ga(s_j)=\ga(as_j)=\ga(s_j \al_j(a))=\ga(s_j) \ga\al(a),
\end{align*}
for all $a\in A$. Hence, $\be_i\ga(a)b_{ij}=b_{ij}\ga\al_j(a)$,
$a \in A$, and so
\begin{equation} \label{eq:conj}
\be_i(b) b_{ij}= b_{ij} \ga\al_j\ga^{-1}(b),
\end{equation}
for all $b \in B$. Therefore, the \textit{matrix $[b_{ij}]$ associated with the isomorphism $\ga$} intertwines $\{ \be_{i} \}_{i=1}^{n_{\be}}$ and $\{\ga \al_{j} \ga^{-1}\}_{j=1}^{n_{\al}}$.

\begin{lemma}\label{L:ti}
Let $\alg(A,\al)$ be an operator algebra associated with the multivariable system $(A,\al)$. If a sequence $(a_k)_k$ in $A$ satisfies  $\lim_k s_i a_k =0$, for some $1\leq i \leq n_{\al}$, then $\lim_k a_k=0$.
\end{lemma}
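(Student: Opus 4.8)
The plan is to exploit the idempotent maps provided by Definition~\ref{alg}, most notably the map $F_i$ of part~(iv) together with the ``no nonzero right divisors of $0$'' property of the generators $s_i$ in part~(ii). First I would observe that $F_i$ is, by hypothesis, a bounded idempotent that kills every monomial except those of the form $s_i a$ ($a\in A$), which it fixes. In particular $F_i(s_i a)=s_i a$ for all $a\in A$, so $F_i$ restricted to the Banach subspace $s_i A = \{s_i a : a\in A\}$ is the identity, and $F_i$ has norm bounded by some constant $M$. If $\lim_k s_i a_k = 0$, then, since each $s_i a_k$ already lies in $s_i A$, applying $F_i$ does nothing; boundedness of $F_i$ is therefore not yet the point — the real content is passing from $s_i a_k \to 0$ back to $a_k \to 0$.

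The key step is to identify the Banach-space structure on $s_i A$. I would argue that the map $A \to \alg(A,\al)$, $a \mapsto s_i a$, is bounded below: that is, there is $c>0$ with $\nor{s_i a} \geq c\nor{a}$ for all $a\in A$. Granting this, $\lim_k s_i a_k = 0$ immediately forces $\lim_k a_k = 0$, which is exactly the assertion. To get the lower bound, the natural route is to use condition~(i): the idempotent $E_0$ identifies $\alg(A,\al)\cap\alg(A,\al)^*$ with $A$, and one can try to build, from $E_0$ and the covariance relations, a bounded left inverse for the map $a\mapsto s_i a$. Concretely, for $a,a'\in A$ one has $(s_i a)^*(s_i a') $ or $a'^*(s_i)^* s_i a$; when $s_i$ is (a multiple of) an isometry this recovers $a'^* a$ up to scalar, but in the general axiomatic setting I would instead lean on the fact that $s_i$ is not a right divisor of $0$ \emph{plus} the existence of the bounded projections, via a closed-graph / open-mapping argument: the linear map $a\mapsto s_i a$ from the Banach space $A$ into $\alg(A,\al)$ is injective (no right divisors of $0$, and in fact $s_i a = 0 \Rightarrow a=0$ since $s_i a = 0$ makes $s_i$ a right divisor of $0$ unless $a=0$) and has closed range — its range is $F_i(\alg(A,\al))$, which is closed because $F_i$ is a bounded idempotent. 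A bounded bijective linear map between Banach spaces has bounded inverse, so $a\mapsto s_i a$ is bounded below, completing the proof.

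The main obstacle I anticipate is establishing that the range of $a\mapsto s_i a$ is \emph{exactly} $F_i(\alg(A,\al))$ and that this map is onto its presumed range with closed image; the definition says $F_i$ ``annihilates all monomials except the ones of the form $s_i a$, which are left invariant,'' which strongly suggests $F_i(\alg(A,\al)) = \overline{s_i A}$, but one must check there is no completion subtlety — i.e., that $s_i A$ is already closed, equivalently that $a\mapsto s_i a$ has closed range. This is where injectivity (from the right-divisor condition) combined with the boundedness and idempotency of $F_i$ does the work: $F_i$ is a bounded projection onto the closure of $s_i A$, and I would verify that $s_i A$ is dense in $F_i(\alg(A,\al))$ and then use that the continuous bijection $A \to F_i(\alg(A,\al))$ (well-defined once we know the range is all of $F_i(\alg(A,\al))$) is open by the open mapping theorem. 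Once boundedness below is in hand, the conclusion $\lim_k a_k = 0$ from $\lim_k s_i a_k = 0$ is immediate.
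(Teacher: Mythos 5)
Your argument is essentially the paper's: the authors also reduce the statement to the open mapping theorem, observing that $\nor{a}_i \equiv \nor{s_i a}$ is a norm on $A$ (because $s_i$ is not a right divisor of zero) and that the identity map $(A,\nor{\cdot}) \rightarrow (A,\nor{\cdot}_i)$ is continuous, hence bicontinuous. The point you single out as the ``main obstacle'' --- that $s_iA$ is closed in $\alg(A,\al)$, equivalently that $(A,\nor{\cdot}_i)$ is complete --- is precisely the step the paper passes over when it asserts that $(A,\nor{\cdot}_i)$ ``becomes a Banach space.'' Be aware, however, that your proposed justification of this step is circular: knowing that $F_i$ is a bounded idempotent whose range is $\overline{s_iA}$ does not show that $s_iA$ equals its closure (an injective bounded map can have dense, non-closed range even when a bounded projection onto the closure of that range exists --- take the closure to be the whole space and the projection to be the identity). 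So your write-up correctly isolates the one delicate point but does not actually close it; the paper does not close it either, it simply postulates completeness. Everything else in your proposal --- injectivity of $a\mapsto s_ia$ from the right-divisor condition, boundedness below via the open mapping theorem, and the immediate deduction of $\lim_k a_k=0$ --- matches the published proof.
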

\begin{proof}
Since $s_i$ is not a right divisor of zero, the $\ca$-algebra $A$ equipped with the seminorm $\nor{a}_i \equiv \nor{s_ia}$, $a\in A$, becomes a Banach space. The identity map $\id\colon (A,\nor{\cdot}) \rightarrow (A,\nor{\cdot}_i)$ is continuous, hence by the open mapping theorem bicontinuous, and the conclusion follows.
\end{proof}

\begin{lemma}\label{L:Fer}
Let $\alg(A,\al)$ and $\alg (B, \be)$ be operator algebras associated with the multivariable systems $(A,\al)$ and $(B, \be)$ respectively, and let $\ga: \alg(A,\al) \rightarrow \alg (B, \be)$ be an isometric isomorphism. Then, for a given tuple $(y_1, y_2, \dots, y_{n_{\be}}) \in \oplus_{\, 1}^{n_{\be}}B$ there exist a sequence $ \left( (x_{1}^{k} , x_{2}^{k}, \dots, x_{n_{\al}}^{k}) \right)_k$ in $\oplus_{\, 1}^{n_{\al}}B$ such that
\begin{align*}
y_i & = \lim_k b_{i1}x^k_1 + b_{i2} x^k_2 + \cdots + b_{i n_{\al}} x^k_{n_{\al}}, \quad i=1,2,,\dots,n_{\be},
\end{align*}
where $[b_{ij}]$ is the matrix associated with $\ga$.
\end{lemma}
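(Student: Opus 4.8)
The plan is to exploit the surjectivity of $\ga$ together with the fact that $\ga$ maps $A$ onto $B$ and behaves well with respect to the idempotents $F_i$. The key point is that the monomials $t_i b$, $b\in B$, span (as a Banach space, after taking closures) the range of $F_i$ inside $\alg(B,\be)$, and since $\ga$ is onto, every such element is a limit of images of elements of $\alg(A,\al)$.

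First I would fix $i$ and pick $b\in B$; then $t_i b$ lies in $\alg(B,\be)$, so by surjectivity of $\ga$ there is $w\in\alg(A,\al)$ with $\ga(w)=t_i b$. By Definition~\ref{alg}(iii) applied to $\alg(A,\al)$, $w$ is a limit of finite sums of monomials $s_{i_1}\cdots s_{i_k}a$; applying $\ga$ and then the idempotent $F_i$ on the $\alg(B,\be)$ side, and using that $\ga$ is continuous, one sees that $t_i b$ is a limit of $F_i\ga(\text{monomials})$. The crucial computation is that for a monomial $s_{i_1}\cdots s_{i_k}a$ in $\alg(A,\al)$, its image $\ga(s_{i_1})\cdots\ga(s_{i_k})\ga(a)$, when hit with $F_i$ and then $E_0$-type bookkeeping is applied, contributes a term of the form $t_i\bigl(b_{i\,i_1}(\text{lower order stuff})\ga(a)\bigr)$ — that is, after applying $F_i$ the leading coefficient of any word of length $\ge 1$ starting with $s_{i_1}$ is $b_{i\,i_1}$ times something in $B$ (with the length-zero words contributing nothing to $F_i$, and longer words contributing terms that can be absorbed into the $x_j^k$). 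Collecting, for a finite linear combination of monomials the $F_i$-component has the form $\sum_{j=1}^{n_\al} b_{ij} x_j$ for suitable $x_j\in B$, where the same family $(x_1,\dots,x_{n_\al})$ works simultaneously for all $i$ because it is read off from the common underlying combination of $\ga(s_j)$'s. Passing to the limit gives the claim.

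More carefully, I would argue as follows. Note $\ga(s_j)=b_{0j}+\sum_{i=1}^{n_\be}t_i b_{ij}+Y_j$ with all $F_k(Y_j)=0$ and $E_0(Y_j)=0$; hence $F_i(\ga(s_j))=t_i b_{ij}$. Given $(y_1,\dots,y_{n_\be})$, set $y:=\sum_{i=1}^{n_\be} t_i y_i\in\alg(B,\be)$ and choose $w_k\in\alg(A,\al)$ with $\ga(w_k)\to y$; in fact, using Definition~\ref{alg}(iii) for $\alg(A,\al)$ we may take each $w_k$ of the form $\sum_{j=1}^{n_\al} s_j a_j^k + (\text{words of length }0\text{ or }\ge 2)$. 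Applying $\sum_i F_i$ to $\ga(w_k)$ kills the length-zero part and, modulo the higher-order words, one gets $\sum_{j} F_i(\ga(s_j a_j^k)) = \sum_j t_i\, b_{ij}\,\ga(a_j^k)$ in the $i$-th slot; here I would use that the higher-order words in $w_k$ can themselves be re-expanded and their $F_i$-contributions folded into a redefinition of the coefficients, so set $x_j^k:=\ga(a_j^k)+(\text{correction from the higher words})\in B$. Since $\sum_i F_i$ is bounded, applying it to $\ga(w_k)\to y=\sum_i t_i y_i$ and comparing $i$-th components (the components are separated by applying $F_i$ again, or by Lemma~\ref{L:ti} on the $\alg(B,\be)$ side, since $t_i$ is not a right divisor of zero) yields $t_i\bigl(\sum_j b_{ij}x_j^k\bigr)\to t_i y_i$, whence $\sum_j b_{ij}x_j^k\to y_i$ by Lemma~\ref{L:ti}.

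**The main obstacle** I anticipate is the careful handling of the higher-order words (length $\ge 2$) in the expansion of $w_k$: one must check that their images under $\ga$, after applying $F_i$, really do land in the span of $\{t_i b: b\in B\}$ with coefficients that can be rewritten as $\sum_j b_{ij}(\text{something in }B)$, rather than introducing genuinely new generators. This works because $\ga(s_{i_1}\cdots s_{i_k}a)=\ga(s_{i_1})\cdots\ga(s_{i_k})\ga(a)$ and the leading ($F_i$-)part of a product is governed by the leading parts of the factors, the very first of which contributes the column $b_{i\,i_1}$; the remaining factors and $\ga(a)$ get absorbed into an element of $B$ by the covariance relations (\ref{eq:conj}) and condition (i) of Definition~\ref{alg}. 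Making this bookkeeping precise — essentially an induction on word length showing $F_i(\ga(\text{word}))\in t_i\cdot\bigl(\sum_j b_{ij}B\bigr)$ — is the technical heart of the argument; everything else is a routine application of continuity, boundedness of the $F_i$'s, surjectivity of $\ga$, and Lemma~\ref{L:ti}.
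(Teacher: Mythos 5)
Your proposal follows essentially the same route as the paper's proof: use surjectivity of $\gamma$ and condition (iii) of Definition~\ref{alg} to write $\sum_i t_i y_i$ as a limit of $\gamma$ applied to linear combinations of monomials, apply the bounded idempotent $F_i$, show that $F_i(\gamma(s_{\nu_1}\cdots s_{\nu_q}))$ lies in $t_i\bigl(b_{i1}B+\cdots+b_{in_\alpha}B\bigr)$ with coefficients independent of $i$, and finish with Lemma~\ref{L:ti} applied to $t_i$. One detail to correct: $F_i(\gamma(s_{\nu_1}\cdots s_{\nu_q}))$ is not a single term $t_i b_{i\nu_1}(\cdots)$ governed by the first letter, but the Leibniz-type sum $\sum_r E_0(\gamma(s_{\nu_1}))\cdots F_i(\gamma(s_{\nu_r}))\cdots E_0(\gamma(s_{\nu_q}))$, so every position $r$ contributes a term involving the column $b_{i\nu_r}$; pushing the left-hand $E_0$-factors past $t_i$ by covariance and then past $b_{i\nu_r}$ by (\ref{eq:conj}) shows each such term is $t_i b_{i\nu_r}x_r$ with $x_r\in B$ independent of $i$, which is precisely the bookkeeping your proposed induction would establish, so the argument goes through.
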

\begin{proof}
Let $\nu=\nu_1\dots \nu_q$ be a word on $\emptyset, 1, \dots, n_{\al}$ and let $s_{\nu}=s_{\nu_1}s_{\nu_2}\dots s_{\nu_q}$,  with the understanding that $s_\emptyset$ denotes an element in $A$.  Then
\begin{align*}
F_i(\gamma(s_\nu))
&= F_i(\gamma(s_{\nu_1}))E_0(\gamma(s_{\nu_2}))\cdots E_0(\gamma(s_{\nu_q})) + \dots \\
&\hspace{1.3in} \dots + E_0(\gamma(s_{\nu_1}))\cdots E_0(\gamma(s_{\nu_{q-1}}))F_i(\gamma(s_{\nu_q})) \\
& =  F_i(\gamma(s_{\nu_1}))E_0(\gamma(s_{\nu_2})) \cdots E_0(\gamma(s_{\nu_q})) + \dots \\
&\hspace{1.3in} \dots + E_0(\gamma(s_{\nu_1})) \cdots E_0(\gamma(s_{\nu_{q-1}})) F_i(\gamma(s_{\nu_q})) \\
& \hspace{-.35in} = \sum_r E_0(\gamma(s_{\nu_1})) \cdots E_0(\gamma(s_{\nu_{r-1}})) F_i(\gamma(s_{\nu_r})) E_0(\gamma(s_{\nu_{r+1}})) \cdots E_0(\gamma(s_{\nu_q})).
\end{align*}
By equation (\ref{eq:conj}) and for suitable $y_r, y'_r \in B$ we obtain,
\begin{align*}
F_i(\gamma(s_\nu))
& = \sum_r y_r  t_ib_{i \nu_r}  y'_r
  = \sum_r t_i \be_i(y_r)b_{i \nu_r}  y'_r  \\
& = \sum_r t_i b_{i \nu_r}  \ga\al_{\nu_r} \ga^{-1} ( y_r )  y'_r
  = \sum_{r=1}^{n_{\al}} t_i b_{i r}  x_r.
\end{align*}
Therefore
\[
F_i(\gamma(s_\nu)) = t_i(b_{i 1} x_1 + b_{i2}x_2 + \dots + b_{i{n_{\al}}} x_{n_{\al}}).
\]
The same follows for linear combinations of the monomials $\gamma(s_\nu)$. For example, for two words $\mu=\mu_1 \cdots \mu_w$ and $\nu=\nu_1\cdots \nu_q$ we obtain that
\begin{align*}
F_i\gamma(s_\mu + s_\nu) & = F_i\gamma(s_\mu) + F_i\gamma(s_\mu) \\
& = t_i(b_{i\nu_1} x_{\nu_1} + b_{i \nu_2} x_{\nu_2} + \cdots + b_{i \nu_q} x_{\nu_q}) +\\
& \hspace{1cm} + t_i(b_{i\mu_1} x_{\mu_1} + b_{i \mu_2} x_{\mu_2} + \cdots + b_{i \mu_q} x_{\mu_q})\\
& = t_i(b_{i 1} x_1 + b_{i2}x_2 + \dots + b_{i{n_{\al}}} x_{n_{\al}}) + \\
& \hspace{1cm} +  t_i(b_{i 1} x_1' + b_{i2}x_2' + \dots + b_{i{n_{\al}}} x_{n_{\al}}')\\
& = t_i( b_{i 1} x_1'' + b_{i2}x_2'' + \dots + b_{i{n_{\al}}} x_{n_{\al}}''),
\end{align*}
by introducing the appropriate zeros. Since $t_1y_1 +t_2y_2 + \dots +t_{n_{\be}}y_{n_{\be}}$ is a limit of such linear combinations, we obtain sequences
$\left((x_{1}^{k} , x_{2}^{k}, \dots, x_{n_{\al}}^{k}) \right)_k$
such that
\begin{align*}
t_iy_i & = \lim_k t_i(b_{i1}x^k_1 + b_{i2} x^k_2 + \cdots + b_{i n_{\al}} x^k_{n_{\al}})
\end{align*}
for all $1\leq i \leq n_{\be}$, and Lemma \ref{L:ti} finishes the proof.
\end{proof}

\begin{proposition}\label{P:right inv}
Let $\alg(A,\al)$ and $\alg (B, \be)$ be operator algebras associated with the multivariable systems $(A,\al)$ and $(B, \be)$ respectively, and let $\ga\colon \alg(A,\al) \rightarrow \alg (B, \be)$ be an isometric isomorphism. Then the matrix $[b_{ij}]$ associated with $\ga$ is right invertible.
\end{proposition}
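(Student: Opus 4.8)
The plan is to read off from Lemma~\ref{L:Fer} that $[b_{ij}]$ is \emph{approximately} right invertible over $B$, and then promote this to genuine right invertibility by the elementary fact that the invertible elements of a unital Banach algebra form an open set. All of the substantive work has already been done in Lemma~\ref{L:Fer}; what remains is essentially bookkeeping inside the matrix algebra $M_{n_\be}(B)$.

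First I would apply Lemma~\ref{L:Fer}, for each $\ell \in \{1, \dots, n_\be\}$, to the tuple $(y_1, \dots, y_{n_\be}) \in \oplus_{1}^{n_\be} B$ with $y_\ell = 1_B$ and $y_i = 0$ for $i \neq \ell$. This yields a sequence $\bigl( (x^{k}_{1,\ell}, \dots, x^{k}_{n_\al,\ell}) \bigr)_k$ in $\oplus_{1}^{n_\al} B$ with
\[
\lim_k \bigl( b_{i1} x^{k}_{1,\ell} + b_{i2} x^{k}_{2,\ell} + \cdots + b_{i n_\al} x^{k}_{n_\al,\ell} \bigr) = \delta_{i\ell}\, 1_B, \qquad i = 1, \dots, n_\be .
\]
Collecting these tuples into columns, set $V_k = [x^{k}_{j\ell}] \in M_{n_\al, n_\be}(B)$. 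The $(i,\ell)$-entry of $[b_{ij}]\, V_k \in M_{n_\be}(B)$ is $\sum_{j} b_{ij} x^{k}_{j\ell}$, which by the display above converges to $\delta_{i\ell} 1_B$; since the matrices in question have a fixed finite size, entrywise convergence is norm convergence, so $[b_{ij}]\, V_k \longrightarrow I_{n_\be}$ in $M_{n_\be}(B)$.

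Since $M_{n_\be}(B)$ is a unital Banach algebra (in fact a $\ca$-algebra), its group of invertibles is open, so $[b_{ij}]\, V_{k_0}$ is invertible for some $k_0$. Then
\[
[b_{ij}]\,\bigl( V_{k_0} ([b_{ij}]\, V_{k_0})^{-1} \bigr) = \bigl( [b_{ij}]\, V_{k_0} \bigr) \bigl( [b_{ij}]\, V_{k_0} \bigr)^{-1} = I_{n_\be},
\]
so $V_{k_0}([b_{ij}]\, V_{k_0})^{-1} \in M_{n_\al, n_\be}(B)$ is a right inverse of $[b_{ij}]$, which is the assertion.

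I do not expect a genuine obstacle: the force of the statement is entirely contained in Lemma~\ref{L:Fer}, and the rest is just the remark that ``approximately right invertible'' upgrades to ``right invertible'' over a unital $\ca$-algebra. The only point to keep in mind is that this upgrade must be carried out inside the fixed finite-size matrix algebra $M_{n_\be}(B)$, where both the limit and the openness of the invertible group make sense, rather than attempting to invert the rectangular matrix $[b_{ij}]$ directly.
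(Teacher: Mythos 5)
Your proposal is correct and follows essentially the same route as the paper: apply Lemma~\ref{L:Fer} to the standard basis tuples to produce an approximate right inverse, then upgrade to a genuine right inverse inside $M_{n_\be}(B)$. The paper does the upgrade with an explicit quantitative bound ($\eps < 1/(n_\al n_\be)$ and the estimate $\|I-[b_{ij}][x_{ij}]\|<1$, hence invertibility via the Neumann series), whereas you invoke openness of the invertible group — the same fact in different clothing.
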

\begin{proof}
By Lemma \ref{L:Fer}, for any tuple $(y_1,\dots,y_{n_{\be}})$ there exists a sequence $\left( (x_{1}^{k} , x_{2}^{k}, \dots, x_{n_{\al}}^{k}) \right)_k$ such that
\begin{align*}
y_i & = \lim_k b_{i1}x^k_1 + b_{i2} x^k_2 + \cdots + b_{i n_{\al}} x^k_{n_{\al}}
\end{align*}
for all $1\leq i \leq n_{\be}$.
Hence for the tuple $(1,0,\dots,0)$ and for $\eps< \frac{1}{n_{\al}n_{\be}}$ there are $x_{j1}$, for $1\leq j \leq n_\al$, such that
\begin{align*}
& \nor{\de_{i1} - b_{i1}x_{11} + b_{i2} x_{21} + \cdots + b_{in_{\al}} x_{n_{\al}1}} < \eps,
\end{align*}
for all $i=1,\dots,n_{\be}$. Repeating for $(0,1,\dots,0), \dots, (0,0\dots,1)$, we obtain elements $x_{ij}$, for $1\leq i \leq n_{\al}$ and $1 \leq j \leq n_{\be}$, such that
\begin{align*}
\nor{\de_{ij} - \sum_{k=1}^{n_{\al}} b_{ik}x_{kj} } <\eps < \frac{1}{n_{\al}n_{\be}}.
\end{align*}
Hence,
\begin{align*}
\nor{I_n - [b_{ij}][x_{ij}]}
& = \nor{[\de_{ij} - \sum_{k=1}^{n_{\al}} b_{ik}x_{kj}]}\\
& \leq \sum_{i,j}\nor{\de_{ij} - \sum_{k=1}^{n_{\al}} b_{ik}x_{kj}}
 < \sum_{i,j} \frac{1}{n_{\al}n_{\be}} = 1.
\end{align*}
Therefore $[b_{ij}][x_{ij}]$ is invertible, hence $[b_{ij}]$ is right invertible.
\end{proof}

From Proposition~\ref{P:right inv} and Theorem~\ref{leftinvert}, we obtain the following key result.

\begin{theorem}\label{T:right inv}
Let $\alg(A,\al)$ and $\alg (B, \be)$ be operator algebras associated with the automorphic multivariable systems $(A,\al)$ and $(B, \be)$ respectively, and let $\ga\colon \alg(A,\al) \rightarrow \alg (B, \be)$ be an isometric isomorphism. Then $n_{\al}= n_{\be}$ and the matrix $[b_{ij}]$ associated with $\ga$ is invertible in $M_n (B)$.
\end{theorem}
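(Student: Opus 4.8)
The plan is to combine the two previously established facts: Proposition~\ref{P:right inv} says the matrix $[b_{ij}]$ associated with the isometric isomorphism $\ga$ is right invertible, and the paragraph before Lemma~\ref{L:ti} records that $[b_{ij}] \in M_{n_{\be}, n_{\al}}(B)$ intertwines the families $\{\be_i\}_{i=1}^{n_{\be}}$ and $\{\ga\al_j\ga^{-1}\}_{j=1}^{n_{\al}}$. Since $(A,\al)$ and $(B,\be)$ are automorphic, each $\be_i$ is a $*$-automorphism of $B$, and each $\ga\al_j\ga^{-1}$ is a $*$-automorphism of $B$ (conjugation of the automorphism $\al_j$ of $A$ by the $*$-isomorphism $\ga\colon A \to B$); in particular all of these maps are $*$-epimorphisms of $B$. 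So the hypotheses of Theorem~\ref{leftinvert} are met once we know which of $n_\al, n_\be$ is the larger.

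First I would dispose of the size asymmetry. Theorem~\ref{leftinvert}(i) is stated for an $m \times n$ matrix with $m \geq n$. Applying it directly requires $n_{\be} \geq n_{\al}$. To get this, I would run the symmetric argument: $\ga^{-1}\colon \alg(B,\be) \to \alg(A,\al)$ is also an isometric isomorphism, so by Proposition~\ref{P:right inv} its associated matrix, which lies in $M_{n_{\al}, n_{\be}}(A)$ and intertwines $\{\al_j\}$ with $\{\ga^{-1}\be_i\ga\}$, is right invertible as well. Thus whichever of the two inequalities $n_\be \ge n_\al$ or $n_\al \ge n_\be$ holds, Theorem~\ref{leftinvert}(i) applies to the correspondingly oriented matrix and forces the two dimensions to be equal. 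Write $n = n_\al = n_\be$.

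With $n_\al = n_\be = n$ settled, $[b_{ij}]$ is a right-invertible square matrix in $M_n(B)$ intertwining $n$-tuples of $*$-epimorphisms of $B$, so Theorem~\ref{leftinvert}(i) now yields directly that $[b_{ij}]$ is invertible in $M_n(B)$. That completes the proof.

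I do not anticipate a serious obstacle here: the theorem is essentially a packaging of Proposition~\ref{P:right inv} and Theorem~\ref{leftinvert}. The only point that needs a word of care is the bookkeeping in the previous paragraph — making sure the two applications of Proposition~\ref{P:right inv} (to $\ga$ and to $\ga^{-1}$) are set up with the correct matrix shapes, and that conjugation by a $*$-isomorphism genuinely sends $*$-automorphisms to $*$-automorphisms (hence to $*$-epimorphisms), so that Theorem~\ref{leftinvert} is legitimately invoked; the automorphic hypothesis on both systems is exactly what guarantees this. No routine computation is required beyond what is already in the cited results.
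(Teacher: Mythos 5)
Your proposal is correct and is essentially the paper's own argument: the paper derives Theorem~\ref{T:right inv} in one line by combining Proposition~\ref{P:right inv} with Theorem~\ref{leftinvert}, exactly as you do. Your extra care about which of $n_\al$, $n_\be$ is larger (passing to $\ga^{-1}$ when needed so that the $m\geq n$ hypothesis of Theorem~\ref{leftinvert} is met) is a point the paper leaves implicit, and you handle it correctly.
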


As a first application of Theorem~\ref{T:right inv}, we obtain that the unitary equivalence class of $X_{(A, \al)}$ is an isomorphism invariant for $\alg(A, \al)$, which is complete in the case of tensor algebras. Note that the unitary equivalence class of $X_{(A, \al)}$ is easy to describe here: $X_{(A,\al)}$ and $X_{(B,\be)}$ are \emph{unitarily equivalent} if and only if there is a $*$-isomorphism $\ga\colon A \rightarrow B$ and a unitary matrix $[u_{ij}] \in M_{n_\be,n_\al}(B)$ that intertwines $\{\be_i\}_{i=1}^{n_\be}$ and $\{\ga\al_j\ga^{-1}\}_{j=1}^{n_\al}$.
(When $n_\al=n_\be$ and $[u_{ij}]$ happens to be diagonal up to a permutation, then the multivariable systems $(A,\al)$ and $(B,\be)$ are said to be \emph{outer conjugate}.)

\begin{theorem}\label{T:iso gives mor}
Let $(A,\al)$ and $(B,\be)$ be two automorphic multivariable $\ca$-dynamical systems.
\begin{enumerate}
\item If $\alg(A,\al)$ and  $\alg (B, \be)$ are isometrically isomorphic then $n_\al=n_\be$ and the correspondences $X_{(A,\al)}$ and $X_{(B,\be)}$ are unitarily equivalent.
\item $\T^+(A,\al)$ and $\T^+(B,\be)$ are isometrically isomorphic if and only if the correspondences $X_{(A,\al)}$ and $X_{(B,\be)}$ are unitarily equivalent.
\end{enumerate}
\end{theorem}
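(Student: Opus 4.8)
The plan is to deduce both statements from Theorem~\ref{T:right inv} together with a polar-decomposition argument applied in the matrix algebra $M_n(B)$. For part~(1), given an isometric isomorphism $\ga\colon\alg(A,\al)\to\alg(B,\be)$, Theorem~\ref{T:right inv} already gives $n_\al=n_\be=:n$ and provides an invertible matrix $[b_{ij}]\in M_n(B)$ which, by equation~(\ref{eq:conj}), intertwines $\{\be_i\}_{i=1}^n$ and $\{\ga\al_j\ga^{-1}\}_{j=1}^n$. I would first show that this intertwining property passes to the polar decomposition $[b_{ij}]=u|b|$ computed in $M_n(B)$, where $|b|=([b_{ij}]^*[b_{ij}])^{1/2}$ and $u\in M_n(B)$ is unitary (invertibility of $[b_{ij}]$ guarantees $u$ is genuinely unitary, not merely a partial isometry). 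The key computation is that $[b_{ij}]^*[b_{ij}]$ commutes with the diagonal representation $\diag(\ga\al_1\ga^{-1},\dots,\ga\al_n\ga^{-1})$ in the appropriate sense, so that $|b|$ does too (by continuous functional calculus / uniqueness of positive square roots), and hence the unitary part $u=[b_{ij}]|b|^{-1}$ inherits the intertwining relation with the \emph{same} pair of families. Writing $u=[u_{ij}]$, the matrix $[u_{ij}]$ is then a unitary in $M_n(B)$ intertwining $\{\be_i\}$ and $\{\ga\al_j\ga^{-1}\}$, which is exactly the data witnessing unitary equivalence of $X_{(A,\al)}$ and $X_{(B,\be)}$, with the required $*$-isomorphism being $\ga|_A$.

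A technical point worth spelling out is the precise form of the intertwining for the adjoint: taking adjoints in $\be_i(b)b_{ij}=b_{ij}\ga\al_j\ga^{-1}(b)$ (using that the $\be_i$ and $\ga\al_j\ga^{-1}$ are $*$-homomorphisms) yields $b_{ij}^*\be_i(b)=\ga\al_j\ga^{-1}(b)b_{ij}^*$, and combining these one checks that the $(j,k)$-entry $\sum_i b_{ij}^* b_{ik}$ of $[b_{ij}]^*[b_{ij}]$ satisfies $\ga\al_j\ga^{-1}(b)\cdot\big(\sum_i b_{ij}^*b_{ik}\big)=\big(\sum_i b_{ij}^*b_{ik}\big)\cdot\ga\al_k\ga^{-1}(b)$. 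In matrix language this says $[b]^*[b]$ intertwines the single diagonal representation $\diag(\ga\al_j\ga^{-1})_j$ with itself, i.e.\ it commutes with that representation of $A$ (transported via the matrix units appropriately); the same then holds for $|b|$ and $|b|^{-1}$ by functional calculus, and one reads off that $u$ intertwines in the same way as $[b_{ij}]$ did.

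For part~(2), one direction is immediate: if $X_{(A,\al)}$ and $X_{(B,\be)}$ are unitarily equivalent, then their tensor algebras are isometrically (indeed completely isometrically) isomorphic, since the tensor algebra is a functorial invariant of the correspondence up to unitary equivalence (one simply transports the Fock representation via the unitary, or invokes the universal property). The other direction is the special case $\alg=\T^+$ of part~(1). The main obstacle, and the step deserving the most care, is the verification that the polar-decomposition unitary $u$ really does inherit the intertwining relation---this rests on the fact that $[b_{ij}]$ is honestly invertible (so that the polar decomposition is clean and $u$ is a two-sided unitary), which is precisely what Theorem~\ref{T:right inv} supplies and is the reason the automorphic hypothesis enters. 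Everything else is bookkeeping: matching the definition of unitary equivalence of $\ca$-correspondences of the form $\bigoplus_i A_{\al_i}$ with the existence of such an intertwining unitary matrix, which the excerpt has already recorded in the paragraph preceding the theorem.
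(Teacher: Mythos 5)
Your proposal is correct and follows essentially the same route as the paper: invoke Theorem~\ref{T:right inv} to get $n_\al=n_\be$ and invertibility of $[b_{ij}]$, take the polar decomposition, and observe that the unitary part inherits the intertwining relation, with the converse of (2) coming from the standard fact that unitarily equivalent correspondences have (completely) isometrically isomorphic tensor algebras. The only difference is that you spell out the functional-calculus argument showing $[b]^*[b]$ commutes with $\diag(\ga\al_j\ga^{-1})$ so that $u=[b]\,|b|^{-1}$ intertwines; the paper asserts this without proof, and your verification is accurate.
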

\begin{proof}
Let $[b_{ij}]$ be the matrix associated with an isometric isomorphism $\ga\colon \alg(A,\al) \rightarrow \alg(B,\be)$. By Theorem~\ref{T:right inv}, $[b_{ij}]$ is invertible. If $[b_{ij}] = w \left| [b_{ij}] \right| $ is the polar decomposition of $[b_{ij}]$, then the unitary $w$ intertwines $\{ \be_{i} \}_{i=1}^{n_{\be}}$ and $\{\ga \al_{j}\ga^{-1} \}_{j=1}^{n_{\al}}$. Thus the pair $(\ga,w)$ induces the desired unitary equivalence.

To end the proof, recall that when $X_{(A,\al)}$ and $X_{(B,\be)}$ (resp. $(A,\al)$ and $(B,\be)$) are unitarily equivalent (resp. outer conjugate) then the tensor algebras (resp. the semicrossed products) are completely isometrically isomorphic.
\end{proof}

\begin{corollary} \label{semipliestens}
Let $(A,\al)$ and $(B,\be)$ be two automorphic multivariable $\ca$-dynamical systems. If the semicrossed products $A\times_{\al} \bbF_{n_{\al}}^{+}$  and $B\times_{\be} \bbF_{n_{\be}}^{+}$ \textup{(}or some $\alg(A,\al)$ and $\alg(B,\be)$\textup{)} are isometrically isomorphic, then the tensor algebras $\T^+(A,\al)$ and $\T^+(B,\be)$ are also isometrically isomorphic.
\end{corollary}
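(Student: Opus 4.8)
The plan is to read this off directly from Theorem~\ref{T:iso gives mor}, which does all the real work. First I would note that the semicrossed product $A\times_{\al}\bbF_{n_{\al}}^{+}$ is itself one of the operator algebras $\alg(A,\al)$ associated with $(A,\al)$ in the sense of Definition~\ref{alg} (this is exactly the remark made right after that definition: conditions (i)--(iv) hold for $A\times_{\al}\bbF_{n_{\al}}^{+}$ via the standard Fej\'er-kernel/expectation argument). Hence it suffices to prove the statement for an arbitrary pair $\alg(A,\al)$, $\alg(B,\be)$, and the parenthetical case subsumes the semicrossed-product case.

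So suppose $\alg(A,\al)$ and $\alg(B,\be)$ are isometrically isomorphic. By part (1) of Theorem~\ref{T:iso gives mor} we get $n_{\al}=n_{\be}$ and a unitary equivalence of the associated $\ca$-correspondences $X_{(A,\al)}$ and $X_{(B,\be)}$. Then part (2) of Theorem~\ref{T:iso gives mor} — precisely the implication that unitary equivalence of the correspondences forces the tensor algebras to be (completely) isometrically isomorphic — yields $\T^+(A,\al)\cong\T^+(B,\be)$ isometrically, which is the claim. If desired, one can strengthen the conclusion to a \emph{complete} isometric isomorphism, since that is what the functoriality of the tensor-algebra construction recalled in the proof of Theorem~\ref{T:iso gives mor} actually provides.

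I do not expect any genuine obstacle: the corollary is a formal consequence of chaining the two halves of Theorem~\ref{T:iso gives mor} through the common notion of unitary equivalence of $X_{(A,\al)}$. The only point worth making explicit in the write-up is the reduction in the first paragraph, namely that the semicrossed product sits inside the axiomatic framework of Definition~\ref{alg}, so that Theorem~\ref{T:iso gives mor}(1) applies to it.
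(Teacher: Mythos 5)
Your proposal is correct and is exactly the argument the paper intends (the corollary is stated without proof precisely because it is the immediate chaining of Theorem~\ref{T:iso gives mor}(1) with the ``if'' direction of Theorem~\ref{T:iso gives mor}(2), after observing that the semicrossed product satisfies Definition~\ref{alg}). Nothing is missing.
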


The converse of Corollary~\ref{semipliestens} does not hold. This follows from \cite[Example 3.24]{DK}.

In the case where the multivariable system acts on a $\ca$-algebra with a trivial center, we obtain that outer conjugacy is a complete invariant for isomorphisms between semicrossed products, by combining Theorem \ref{T:iso gives mor} with Lemma \ref{L:quasisim}.

\begin{theorem} \label{trvcent}
Let $(A,\al)$ and $(B,\be)$ be two automorphic multivariable $\ca$-dynamical systems and assume that $A$ has trivial center. Then the following are equivalent:
\begin{enumerate}
\item $A\times_{\al} \bbF_{n_{\al}}^{+}$  and $B\times_{\be} \bbF_{n_{\be}}^{+}$ are isometrically isomorphic.
\item $\T^+(A,\al)$ and $\T^+(B,\be)$ are isometrically isomorphic.
\item $X_{(A,\al)}$ and $X_{(B,\be)}$ are unitarily equivalent.
\item $(A,\al)$ and $(B,\be)$ are outer conjugate.
\end{enumerate}
Furthermore, if there exist $\alg(A,\al)$ and $\alg(B,\be)$ which are isometrically isomorphic, then any of the above conditions holds, and $n_\al=n_\be$.
\end{theorem}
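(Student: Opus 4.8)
The plan is to close the cycle $(4)\Rightarrow(1)\Rightarrow(3)\Rightarrow(4)$ and add the equivalence $(2)\Leftrightarrow(3)$; since every condition refers only to isometric isomorphism of the associated operator algebras or to invariants of the systems, this suffices. Most links are already available. The equivalence $(2)\Leftrightarrow(3)$ is exactly Theorem~\ref{T:iso gives mor}(2) and needs no hypothesis on the center. For $(1)\Rightarrow(3)$: a semicrossed product is an operator algebra of the form $\alg(A,\al)$, so Theorem~\ref{T:iso gives mor}(1) yields $n_\al=n_\be$ together with a unitary equivalence $X_{(A,\al)}\sim X_{(B,\be)}$. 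Finally $(4)\Rightarrow(1)$ is the standard fact, recalled in the proof of Theorem~\ref{T:iso gives mor}, that outer conjugate systems have completely isometrically isomorphic semicrossed products.

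The one substantive step is $(3)\Rightarrow(4)$, and this is where $A$ having trivial center is used. Assume $X_{(A,\al)}$ and $X_{(B,\be)}$ are unitarily equivalent, so $n_\al=n_\be=:n$, there is a $*$-isomorphism $\ga\colon A\to B$, and a unitary $[u_{ij}]\in M_n(B)$ intertwines $\{\be_i\}_{i=1}^n$ and $\{\ga\al_j\ga^{-1}\}_{j=1}^n$. Since $\ga$ is a $*$-isomorphism, $B$ has trivial center too. A unitary is right invertible, so Theorem~\ref{leftinvert}(ii) (with $\be'_j:=\ga\al_j\ga^{-1}$) gives that $[u_{ij}]$ is quasisimilar to a diagonal invertible matrix $D=\diag(d_1,\dots,d_n)$ intertwining $\{\be_i\}_{i=1}^n$ and $\{\ga\al_{\pi(i)}\ga^{-1}\}_{i=1}^n$ for some $\pi\in S_n$; note $D$ has entries in $B$, being a product of $[u_{ij}]$ with invertible matrices over $B$. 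Each $d_i$ satisfies $\be_i(b)d_i=d_i\,\ga\al_{\pi(i)}\ga^{-1}(b)$ for all $b\in B$, so, arguing as in Lemma~\ref{L:key}, $d_id_i^*\in\be_i(B)'=B'=\bbC 1$ and $d_i^*d_i\in(\ga\al_{\pi(i)}\ga^{-1})(B)'=B'=\bbC 1$; since $d_i$ is invertible, $d_i^*d_i=\la_i1$ with $\la_i>0$, and $w_i:=\la_i^{-1/2}d_i$ is a unitary in $B$ still intertwining $\be_i$ and $\ga\al_{\pi(i)}\ga^{-1}$. Setting $u'_{ij}:=w_i$ for $j=\pi(i)$ and $u'_{ij}:=0$ otherwise, the matrix $[u'_{ij}]$ is a permutation of a diagonal matrix with unitary entries, hence unitary; it is diagonal up to the permutation $\pi$; and it intertwines $\{\be_i\}_{i=1}^n$ and $\{\ga\al_j\ga^{-1}\}_{j=1}^n$. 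Thus $(A,\al)$ and $(B,\be)$ are outer conjugate.

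For the closing assertion, if some $\alg(A,\al)$ and $\alg(B,\be)$ are isometrically isomorphic, then Theorem~\ref{T:iso gives mor}(1) (or Theorem~\ref{T:right inv}) gives $n_\al=n_\be$ and that $X_{(A,\al)}$ and $X_{(B,\be)}$ are unitarily equivalent, i.e. $(3)$; hence all of $(1)$--$(4)$ hold by the equivalences just established. I expect the only delicate point is the upgrade in $(3)\Rightarrow(4)$ from the quasisimilarity produced by Theorem~\ref{leftinvert}(ii) to an honest diagonal-up-to-permutation \emph{unitary}; the rest is a repackaging of Theorem~\ref{T:iso gives mor} together with the standard implication that unitary equivalence of correspondences (resp.\ outer conjugacy) gives complete isometric isomorphism of tensor algebras (resp.\ semicrossed products).
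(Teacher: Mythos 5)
Your proposal is correct and follows exactly the route the paper indicates (the paper gives no detailed proof of this theorem, only the remark that it follows ``by combining Theorem~\ref{T:iso gives mor} with Lemma~\ref{L:quasisim}''): the cycle of implications via Theorem~\ref{T:iso gives mor} is the intended bookkeeping, and your $(3)\Rightarrow(4)$ step --- Gaussian elimination over the trivial-center algebra to reach a diagonal intertwiner, then the Lemma~\ref{L:key} argument showing each invertible diagonal entry is a scalar multiple of a unitary --- is precisely the missing content the authors leave to the reader. The details you supply (in particular $d_id_i^*,d_i^*d_i\in\bbC 1$ with equal norms, so $\la_i^{-1/2}d_i$ is a unitary still intertwining $\be_i$ and $\ga\al_{\pi(i)}\ga^{-1}$) are accurate.
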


Let $A$ be a unital $\ca$-algebra and let $P(A)$ be its pure state space equipped with the $w^*$-topology. The \textit{Fell spectrum} $\hat{A}$ of $A$  is the space of unitary equivalence classes of non-zero irreducible representations of $A$. (The usual unitary equivalence of representations will be denoted as $\sim$.) The GNS construction provides a surjection $P(A) \rightarrow \hat{A} $ and $\hat{A}$ is given the quotient topology. There is another more convenient description of the (Fell) spectrum of $A$ due to Ernest \cite{Ernest}. Let $\H_A$ be a fixed Hilbert space of dimension equal the cardinal $\kappa$ of a dense subset of $A$. A \textit{railway representation} $\rho$ of $A$ is a representation  which is unitarily equivalent to the $\kappa$-ampliation of some irreducible representation $\rho_0$ of $A$. In particular, if $\rho$ is a railway representation, then $\rho(A)$ has trivial center. Let $\R(A)$ denote the space of all railway representations of A acting on $ \H_A$, equipped  with the topology of pointwise convergence relative to the strong operator topology. Ernest shows in \cite{Ernest} that the canonical surjection $\R(A) \rightarrow \hat{A}$, which associates with each railway representation $\rho \in \R (A)$ the unitary equivalence class of the irreducible representation $\rho_0$ associated with $\rho$, is both open and continuous. Hence, the space $ \R(A)\slash \sim$ equipped with the quotient topology is homeomorphic in a canonical way with the spectrum of $A$. (Indeed two railway representations are unitarily equivalent if and only if their associated irreducible representations are unitarily equivalent.) For the sequel, we adopt Ernest's picture for the spectrum, i.e., $\hat{A}=  \R(A)\slash \sim$.
We require the following elementary fact regarding the open sets in $\hat{A}$.

\begin{proposition} \label{open}
Let $A$ be a $\ca$-algebra, $\rho \in \R(A)$,  $a \in A$ and $\xi \in \H_A$. Then the set
\[
U(\rho \mid a, \xi, \epsilon) \equiv \{[\rho'] \mid \rho' \in \R(A), \, \|(\rho -\rho')(a)\xi \|< \epsilon \}
\]
is open in $\hat{A}$.
\end{proposition}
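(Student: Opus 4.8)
The plan is to unwind Ernest's description of the spectrum and reduce the claim to the openness of the canonical surjection $q\colon \R(A) \to \hat A$. First I would observe that the set in question is the image under $q$ of
\[
V \equiv \{ \rho' \in \R(A) \mid \|(\rho - \rho')(a)\xi\| < \epsilon \},
\]
which is open in $\R(A)$ by the very definition of the topology on $\R(A)$ (pointwise-strong convergence): the map $\rho' \mapsto (\rho'(a) - \rho(a))\xi \in \H_A$ is continuous from $\R(A)$ to $\H_A$, and $V$ is the preimage of the open ball of radius $\epsilon$ about $0$. Since $q$ is open by Ernest's theorem, $q(V) = U(\rho \mid a, \xi, \epsilon)$ is open in $\hat A$, and we are done.

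The one point that needs a word of care is that $q(V)$ is literally the displayed set $U(\rho \mid a, \xi, \epsilon)$: by definition $q(V) = \{ [\rho'] : \rho' \in V\}$, and this is exactly $\{[\rho'] : \rho' \in \R(A),\ \|(\rho-\rho')(a)\xi\| < \epsilon\}$, matching the statement. (There is no hidden subtlety about different representatives of the same class being forced into or out of $V$; $q(V)$ is simply the set of classes meeting $V$, which is all that is claimed.)

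I expect the main obstacle — such as it is — to be purely expository: making sure the continuity of $\rho' \mapsto \rho'(a)\xi$ is invoked correctly, namely that the strong-operator pointwise topology on $\R(A)$ is precisely the initial topology making all such evaluation maps continuous, so that the sub-basic sets $V$ above are genuinely open. Once that is in place the argument is immediate from the openness of $q$, which we are entitled to cite from Ernest \cite{Ernest}. I would therefore keep the proof to two or three sentences: identify $U(\rho\mid a,\xi,\epsilon)$ as $q(V)$ with $V$ sub-basic open in $\R(A)$, then apply openness of $q$.
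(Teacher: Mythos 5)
Your argument is correct, and it takes a genuinely different (and shorter) route than the paper's. You reduce the claim to the openness of the canonical surjection $q\colon \R(A)\to\hat{A}$, which the paper explicitly records from Ernest's work, applied to the set $V=\{\rho'\in\R(A) : \|(\rho-\rho')(a)\xi\|<\epsilon\}$; this $V$ is indeed open for the pointwise--SOT topology since $\rho'\mapsto\rho'(a)\xi$ is continuous, and $q(V)$ is literally the displayed set, so openness of $q$ finishes the proof. (The one bookkeeping point is that Ernest's openness statement concerns the surjection onto the Fell spectrum with its usual topology, while the quotient map onto $\R(A)/\sim$ is what you actually use; but the paper has already identified the two via the canonical homeomorphism, so this transfer is legitimate.) The paper argues dually, using only the \emph{definition} of the quotient topology on $\hat{A}=\R(A)/\sim$: it shows that the saturation $U'=q^{-1}\bigl(U(\rho\mid a,\xi,\epsilon)\bigr)=\{\ad_w\rho' : w \text{ unitary},\ \rho'\in V\}$ is open in $\R(A)$, by exhibiting around each $\ad_w\rho'\in U'$ a basic neighbourhood $O_{\rho',w}$ (controlling $\|(\ad_w\rho'-\rho'')(a)w\xi\|$) contained in $U'$. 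That hands-on computation with the unitaries $w$ is precisely the content of ``$q$ is open'' restricted to sets of this form, so the paper's proof is self-contained where yours outsources the work to Ernest's theorem; conversely, your proof is cleaner given that the openness of $q$ has already been cited. Both hinge on the same elementary observation that $V$ is sub-basic open in $\R(A)$.
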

\begin{proof}
It is enough to show that the set
\[
U'= \{ \ad_{w} \rho' \mid w \in B(\H_A) \mbox{ unitary, }  \|(\rho -\rho')(a)\xi \|< \epsilon \}
\]
is open with respect to the topology of pointwise convergence in $\R(A)$.

Let $\ad_w \rho' \in U' $ arbitrary and let $\delta >0$ so that
\begin{equation} \label {delta}
\|(\ad_w \rho' -\ad_w \rho)(a)w\xi\|=\|(\rho-\rho')(a)\xi\|=\epsilon - \delta
\end{equation}
Consider now the open set
\[
O_{\rho' , w} =\{\rho'' \in \R(A) \mid \| (\ad_w \rho'-\rho'')(a)w\xi\| <\delta\}.
\]
If we show that $O_{\rho' , w} \subseteq U'$, then $U'$ will be the union of open sets and the conclusion will follow.
Towards this end, let $\rho'' \in O_{\rho' , w} $. By (\ref{delta}), $\|(\ad_w\rho-\rho'')(a)w\xi\| <\epsilon$ and thus
\begin{align*}
\|(\ad_{w^*} \rho'' -\rho)(a)\xi \|
& = \|w (\ad_{w^*}\rho''- \rho) \xi\|  =\|(\rho''- \ad_w \rho )(a)w\xi\| < \epsilon.
\end{align*}
Hence $\ad_{w^*} \rho''  \in U'$ and so $ \rho'' \in U'$, as desired.
\end{proof}

Let $X$ and $Y$ be topological spaces and let $\sigma= (\sigma_1, \sigma_2, \dots, \sigma_n)$ and $\tau=(\tau_1, \tau_2, \dots , \tau_n)$ be multivariable dynamical systems consisting of selfmaps of $X$ and $Y$ respectively. Davidson and Katsoulis \cite[Definition 3.16]{DK} define $(X, \sigma)$ and $(Y, \tau)$ to be \textit{piecewise conjugate} if there exists a homeomorphism $\phi\colon X\rightarrow Y$ and an open cover $\{ U_g \mid g \in S_n \}$ of $Y$ so that
\[
\tau_i =\phi \, \sigma_{g(i)} \, \phi^{-1}, \text{ for each $g \in S_n$ and $1\leq i\leq n$.}
\]

If $A$ is a $\ca$-algebra, then any automorphism (resp. multivariable system) $\al$ of $A$ induces a homeomorphism (resp. multivariable dynamical system) $\hat{\al}$ on its Fell spectrum $\hat{A}$, that maps the equivalence class $[\rho]$ of a railway representation to $[\rho\al]$.

\begin{theorem} \label{mainthm}
Let $(A,\al)$ and $(B,\be)$ be automorphic multivariable $\ca$-dynamical systems and assume that there exist associated operator algebras $\alg(A,\al)$ and $\alg (B, \be)$ which are isometrically isomorphic. Then the multivariable systems $(\hat{A},\hat{\al})$ and $(\hat{B},\hat{\be})$  are piecewise conjugate.
\end{theorem}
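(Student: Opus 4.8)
The plan is to extract the piecewise conjugacy from the invertible intertwining matrix provided by Theorem~\ref{T:right inv}, by diagonalizing it one railway representation at a time and showing that the permutation which appears can be chosen locally constant. Let $\ga\colon\alg(A,\al)\to\alg(B,\be)$ be the given isometric isomorphism; as usual $\ga$ restricts to a $*$-isomorphism of $A$ onto $B$, still written $\ga$. The homeomorphism witnessing the piecewise conjugacy will be $\phi\colon\hat{A}\to\hat{B}$, $\phi([\rho])=[\rho\circ\ga^{-1}]$ for $\rho\in\R(A)$, with inverse $[\sigma]\mapsto[\sigma\circ\ga]$; both are continuous because composing a railway representation with $\ga^{\pm1}$ is continuous for pointwise strong convergence and the canonical surjections $\R(A)\to\hat{A}$ and $\R(B)\to\hat{B}$ are open. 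By Theorem~\ref{T:right inv} we have $n_\al=n_\be=:n$ and the matrix $[b_{ij}]$ associated with $\ga$ is invertible in $M_n(B)$, and by (\ref{eq:conj}) it intertwines $\{\be_i\}_{i=1}^n$ and $\{\ga\al_j\ga^{-1}\}_{j=1}^n$.

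Next, fix a railway representation $\sigma$ of $B$ and set $C:=\sigma(B)$, a unital $\ca$-algebra with trivial center. Since the $\be_i$ and $\al_j$ are automorphisms and $\ga$ an isomorphism, both families $\{\sigma\be_i\}_{i=1}^n$ and $\{\sigma\ga\al_j\ga^{-1}\}_{j=1}^n$ are representations of $B$ \emph{onto} $C$, and applying $\sigma$ to (\ref{eq:conj}) shows that the invertible matrix $[\sigma(b_{ij})]\in M_n(C)$ intertwines them. The Gaussian elimination in the proof of Lemma~\ref{L:quasisim}, applied to $[\sigma(b_{ij})]$, produces a permutation $\pi_\sigma\in S_n$ and an invertible upper triangular matrix that intertwines $\{\sigma\be_i\}_{i=1}^n$ and $\{\sigma\ga\al_{\pi_\sigma(j)}\ga^{-1}\}_{j=1}^n$; its $(i,i)$ entry is an invertible intertwiner of $\sigma\be_i$ and $\sigma\ga\al_{\pi_\sigma(i)}\ga^{-1}$, and passing to the unitary part of its polar decomposition (as in the proof of Theorem~\ref{T:iso gives mor}) yields $\sigma\be_i\sim\sigma\ga\al_{\pi_\sigma(i)}\ga^{-1}$ for every $i$. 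Rewritten through $\phi$ this says $\hat{\be}_i([\sigma])=(\phi\,\hat{\al}_{\pi_\sigma(i)}\,\phi^{-1})([\sigma])$ for all $i$, so the sets $\{[\sigma]\in\hat{B}:\hat{\be}_i([\sigma])=(\phi\,\hat{\al}_{g(i)}\,\phi^{-1})([\sigma])\text{ for all }i\}$, indexed by $g\in S_n$, cover $\hat{B}$.

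The remaining and principal point is to upgrade this to an \emph{open} cover, that is, to show the Gaussian elimination is stable under small perturbations of the representation. Given $[\sigma_0]\in\hat{B}$, I would run the algorithm on $[\sigma_0(b_{ij})]$ and record the finite list of pivots $p_1(\sigma_0),\dots,p_n(\sigma_0)$ it uses, all of them nonzero. The key observation is that, for railway $\sigma$ near $\sigma_0$, the successive entries occurring in the elimination are pointwise-strong continuous functions of $\sigma$: at the first stage they are $\sigma\mapsto\sigma(b_{ij})$, and each later stage replaces an entry by one of the form $\sigma(b_{kl})-\sigma(b_{k j_r})\,\sigma(b_{r j_r})^{-1}\,\sigma(b_{r l})$ in which, by Lemma~\ref{L:key}, the pivot $\sigma(b_{r j_r})$ is a nonzero scalar multiple of a unitary in $C$, whence $\sigma(b_{r j_r})^{-1}=\nor{\sigma(b_{r j_r})}^{-2}\,\sigma(b_{r j_r})^*$ is a genuinely continuous expression in $\sigma(b_{r j_r})$ (here one uses that $\sigma\mapsto\sigma(b^*)$ is automatically pointwise-strong continuous, and that $\nor{\sigma(b_{r j_r})}^2$ is the scalar value of the pointwise-strong continuous map $\sigma\mapsto\sigma(b_{r j_r}b_{r j_r}^*)$). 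Since ``being nonzero'' is an open condition on a pointwise-strong continuous family of operators (by the neighborhood argument of Proposition~\ref{open}), there is an open neighborhood $V_{[\sigma_0]}\subseteq\hat{B}$ of $[\sigma_0]$ on which all the recorded pivots stay nonzero, hence invertible in $C$ by Lemma~\ref{L:key}; on $V_{[\sigma_0]}$ the algorithm runs through the identical formal steps and returns the \emph{same} permutation $g:=\pi_{\sigma_0}$, so $\hat{\be}_i=\phi\,\hat{\al}_{g(i)}\,\phi^{-1}$ on $V_{[\sigma_0]}$ for all $i$. Taking $W_g:=\bigcup\{V_{[\sigma_0]}:\pi_{\sigma_0}=g\}$ then gives an open cover $\{W_g\}_{g\in S_n}$ of $\hat{B}$ with $\hat{\be}_i=\phi\,\hat{\al}_{g(i)}\,\phi^{-1}$ on $W_g$, and together with the homeomorphism $\phi$ this is exactly the assertion that $(\hat{A},\hat{\al})$ and $(\hat{B},\hat{\be})$ are piecewise conjugate.

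I expect the continuity of these stage-by-stage entries --- concretely, the continuity of $\sigma\mapsto\sigma(b_{r j_r})^{-1}$ --- to be the delicate ingredient, since strong-operator inversion is not continuous in general; it is the rigidity of Lemma~\ref{L:key}, forcing each intertwiner that occurs to be a scalar times a unitary, that lets one express the inverse as a continuous function of the element itself and thereby makes the whole elimination locally stable.
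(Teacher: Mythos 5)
Your argument is correct and reaches the same conclusion, but the mechanism by which you produce the \emph{open} cover is genuinely different from the paper's. The paper fixes $[\rho]\in\hat{B}$, groups the maps $\hat{\be}_i$ and $\widehat{\phantom{w}\ga\al_j\ga^{-1}}$ into germs at $[\rho]$, and argues by contradiction: if the germ of $\hat{\be}_1$ contained more $\be$'s than $\al$'s, then Proposition~\ref{open} would supply representations $\rho'$ arbitrarily close to $\rho$ at which the relevant maps disagree, forcing $\rho'(b_{ij})=0$ by Lemma~\ref{L:key} and hence $\rho(b_{ij})=0$ in the limit; the resulting block of zeros then stalls the Gaussian elimination of the invertible matrix $[\rho(b_{ij})]$. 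In other words, the paper uses the topology only to propagate \emph{vanishing} of the original entries $b_{ij}$ back to the base point, and never has to track the transformed entries as the representation varies. You instead run the elimination at a fixed $\sigma_0$ and propagate \emph{non-vanishing} of the pivots to a neighborhood, which requires the stage-by-stage entries to depend continuously on $\sigma$; your observation that Lemma~\ref{L:key} forces each intertwiner to be a scalar multiple of a unitary, so that $c^{-1}=\nor{c}^{-2}c^{*}$ is a continuous expression, is exactly what makes this workable, and it is a nice point not present in the paper. Two details you should tidy if you write this up: (a) after the first stage the entries are no longer of the form $\sigma(b)$ for fixed $b\in B$ but iterated expressions in such terms and in the continuous scalars $\nor{\sigma(b)}^{-2}$, so the continuity, the ``zero or multiple of a unitary'' dichotomy, and the boundedness needed for joint \textsc{sot}-continuity of products must be carried along by induction on the stage; and (b) the neighborhoods you define are conditions on representatives, so you need either the equivariance $c^{(r)}(\ad_w\sigma)=\ad_w\bigl(c^{(r)}(\sigma)\bigr)$ together with openness of the quotient map $\R(B)\to\hat{B}$, or a direct adaptation of the proof of Proposition~\ref{open}, to see that they descend to open subsets of $\hat{B}$. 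What your route buys is a fully explicit local permutation and open set, avoiding the paper's germ bookkeeping and its unspecified ``easy partitioning argument''; what the paper's route buys is that the topological input stays at the level of the original matrix entries.
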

\begin{proof}
Let $\ga\colon \alg(A,\al) \rightarrow \alg (B, \be)$ be an isometric isomorphism. We will show that the mapping $\hat{\ga}^{-1}\colon \hat{A} \rightarrow \hat{B}$ is the homeomorphism implementing the desired piecewise conjugacy between $(\hat{A}, \hat{\al})$ and $(\hat{B}, \hat{\be})$. In order to prove that we will verify that around every point in $\hat{B}$, there is an open set so that the maps $\hat{ \be}_i$, $i=1, 2, \dots, n_{\al}$,  and $\hat{\ga}^{-1}\hat{\al}_{j}\hat{\ga}= \! \! \! \widehat{\phantom{w} \ga \al_j\ga^{-1}}$,  $j=1, 2, \dots, n_{\be}$, when restricted there, they are conjugate.

Let $\rho \in \R(B)$ and pick one of the $\hat{ \be}_1, \hat{ \be}_2, \dots, \hat{ \be}_{n_{\be}}$, say $\hat{ \be}_1$. Let
\begin{equation*}
\{\hat{ \be}_1, \hat{ \be}_2, \dots, \hat{ \be}_r,   \! \! \! \widehat{\phantom{w} \ga \al_1\ga^{-1}},   \! \! \!  \widehat{\phantom{w} \ga \al_2\ga^{-1}}, \dots,  \! \! \! \widehat{\phantom{w} \ga \al_d\ga^{-1}} \}
\end{equation*}
be the mappings from the collection
\begin{equation} \label{germ}
\{\hat{ \be}_1, \hat{ \be}_2, \dots, \hat{ \be}_{n_{\al}},   \! \! \!  \widehat{\phantom{w} \ga \al_1\ga^{-1}},   \! \! \!  \widehat{\phantom{w} \ga \al_2\ga^{-1}}, \dots,  \! \! \!  \widehat{\phantom{w} \ga \al_{n_{\be}}\ga^{-1}} \}
\end{equation}
that ``eventually'' agree with $\hat{\be}_1$ around $[\rho]$ (the germ of $\hat{\be}_1$). By that we mean that there is an open set  $U \subseteq \hat{B}$ containing $[\rho]$ so that
\[
\widehat{\phantom{pp} \ga \al_j\ga^{-1}}|_{U}=\hat{ \be_i}|_U,
\]
for all $i=1,2,\dots, r$ and $j=1,2,\dots,d$. Furthermore, given any open set $ U' \subseteq U\subseteq  \hat{B}$ containing $[\rho]$ and  any $i \leq r$, $j>d$, we have
\begin{equation} \label{contradict}
\widehat{\phantom{pp} \ga \al_j\ga^{-1}}|_{U'} \neq \hat{\be_1}|_{U'} = \hat{\be_i}|_{U'}.
\end{equation}

We are to show that $r=d$.  Once this has been established, an easy partitioning argument for the collection (\ref{germ}) into germs finishes the proof of the theorem.

By way of contradiction we assume that $r>d$, or otherwise we exchange the roles of $A$ and $B$ and their corresponding automorphisms. (We do not exclude the possibility that $d=0$.)

\vspace{.05in}
\noindent
\textit{Claim.}\,If $[b_{ij}]$ is the matrix associated with $\ga$, then $\rho(b_{ij})=0$,
for all $i=1,2,\dots,r$ and $j>d$.

\noindent \textit{Proof of the Claim.}
Indeed, let $\epsilon >0$ and $\xi \in \H_A$ and let
\[
U(\rho \mid b_{ij}, \xi, \epsilon) = \{ [ \rho' ] \in \hat{B} \mid \| (\rho-\rho')(b_{ij})\xi\| < \epsilon\}.
\]
which is open by Proposition~\ref{open}.
Hence $U(\rho \mid b_{ij}, \xi, \epsilon)\bigcap U$ is also open and so by (\ref{contradict}) implies the existence of $[\rho' ] \in U(\rho \mid b_{ij}, \xi, \epsilon) \bigcap U$ so that
\[
\widehat{\phantom{pp} \ga \al_j\ga^{-1}}(\rho') \neq \hat{\be_1}(\rho') = \hat{\be_i}(\rho').
\]
Hence $\rho' \ga \al_{j} \ga^{-1} \nsim \rho \be_i$ and so Lemma \ref{L:key} implies that $\rho'(b_{ij})=0$. Therefore
\[
\| \rho(b_{ij})\xi\| =  \| (\rho-\rho')(b_{ij})\xi\| < \epsilon,
\]
for all $\xi \in \H_A$ and $\epsilon >0$, which proves the claim.

\vspace{.05in}
By Theorem~\ref{T:right inv} the matrix $[\rho(b_{ij})]$ is invertible. It also intertwines the representations $\{\rho \be_{i} \}_{i=1}^{n}$ and $\{\rho \ga \al_{j} \ga^{-1}\}_{j=1}^{n}$ and so we can perform Gaussian elimination as in Lemma~\ref{L:quasisim}. However, the claim above implies that when we reach at the $d+1$ row, there will be no non-zero element on that particular row. This contradicts the invertibility of $[\rho(b_{ij})]$.
\end{proof}

The Fell topology on the spectrum of a simple $\ca$-algebra is the discrete topology. Therefore for such $\ca$-algebras Theorem~\ref{mainthm} says nothing more than the invariance of dimension, i.e., $n_{\al}=n_{\be}$. It turns out that an appropriate modification of the Fell spectrum, combined with the techniques of Theorem~\ref{mainthm} yields a finer invariant that can handle simple $\ca$-algebras and actually a bit more.

\begin{definition}
Let $A$ be a $\ca$-algebra and $\rho , \rho'$ be representations of $A$. We say that $\rho$ and $\rho'$ are \textit{strongly equivalent} (denoted $\rho \ssim \rho'$) if $\rho(A) = \rho'(A)$ and there exists a unitary operator $w \in \rho (A)$ so that $\rho'=\ad_w \rho$.
\end{definition}

Let $\S(A)$ be the space of all non-degenerate representations of $A$ on $\H_A$ with trivial center.
The equivalence $\ssim$ partitions $\S(A)$ into equivalence classes and the collection of all these classes will be denoted as $\tilde{A}$. We equip $\tilde{A}$ with the smallest topology so that sets of the form
\[
 \{[\rho'] \mid \rho' \in \S(A), \, \|(\rho -\rho')(a)\xi \|< \epsilon \},
\]
where $\rho \in \S(A)$, $a \in A$,  $\xi \in \H_{A}$, $\epsilon >0$, are open. The topologized space $\tilde{A}$ gives a finer notion of spectrum than that of the Fell spectrum and it coincides with the Gelfand spectrum in the commutative case. It is easy to see that any automorphism (resp. multivariable system) $\al$ of $A$ induces a homeomorphism (resp. multivariable dynamical system) $\tilde{\al}$ on $\tilde{A}$, that maps the equivalence class $[\rho]$, $\rho \in \S(A)$ onto $[\rho\al]$.
A verbatim repetition of the proof of Theorem~\ref{mainthm} (with hats replaced by tildes) yields the following.

\begin{theorem} \label{secondmainthm}
Let $(A,\al)$ and $(B,\be)$ be automorphic multivariable $\ca$-dynamical systems and assume that there exist associated operator algebras $\alg(A,\al)$ and  $\alg (B, \be)$ which are isometrically isomorphic. Then the multivariable systems $(\tilde{A},\tilde{\al})$ and $(\tilde{B},\tilde{\be})$  are piecewise conjugate.
\end{theorem}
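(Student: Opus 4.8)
The plan is to mimic the proof of Theorem~\ref{mainthm} line by line, replacing the Fell spectrum $\hat{A}$ by $\tilde{A}$ and the equivalence $\sim$ by $\ssim$ throughout, and to check that every ingredient used there survives the substitution. The only facts about $\hat{A}$ that were genuinely exploited are: (a) the topology on $\hat{A}$ is the one making the sets $U(\rho\mid a,\xi,\epsilon)$ open (this is \emph{definitional} for $\tilde{A}$, so no analogue of Proposition~\ref{open} is even needed — the sets $\{[\rho']\mid \rho'\in\S(A),\,\|(\rho-\rho')(a)\xi\|<\epsilon\}$ are open by construction); (b) an automorphism $\al$ of $A$ induces a homeomorphism $[\rho]\mapsto[\rho\al]$ on $\tilde{A}$, which is exactly the statement immediately preceding the theorem (here one needs that $\rho\in\S(A)\Rightarrow\rho\al\in\S(A)$, which holds since $\rho\al(A)=\rho(A)$ has trivial center, and that $\rho\ssim\rho'$ forces $\rho\al\ssim\rho'\al$, which is clear from the definition of $\ssim$); and (c) Lemma~\ref{L:key}, which is the mechanism that converts the failure of a germ-equality $\rho'\ga\al_j\ga^{-1}\nsim\rho\be_i$ into the vanishing $\rho'(b_{ij})=0$.

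First I would record that point (c) goes through verbatim: every $\rho\in\S(A)$ has $\rho(A)$ of trivial center, so Lemma~\ref{L:key} applies to $\phi=\rho\be_i$ and $\psi=\rho\ga\al_j\ga^{-1}$ acting onto $C=\rho(B)$, and as before $\rho'\ga\al_j\ga^{-1}\not\ssim\rho\be_i$ together with \eqref{eq:conj} forces $\rho'(b_{ij})=0$ (note strong inequivalence is weaker than, hence implied by, the failure of ordinary unitary equivalence, so the dichotomy "invertible or zero" still bites). Theorem~\ref{T:right inv} is used in exactly the same way — it is a statement about the matrix $[b_{ij}]$ itself, independent of which spectrum one topologizes — so $[\rho(b_{ij})]$ is invertible in $M_n(\rho(B))$ for every $\rho\in\S(B)$, and it intertwines $\{\rho\be_i\}$ and $\{\rho\ga\al_j\ga^{-1}\}$ by \eqref{eq:conj}.

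Now I would run the germ argument of Theorem~\ref{mainthm} with $\hat{\be}_i$ replaced by $\tilde{\be}_i$ and $\widehat{\ga\al_j\ga^{-1}}$ by $\widetilde{\ga\al_j\ga^{-1}}$. Fixing $\rho\in\S(B)$ and, say, $\tilde{\be}_1$, collect those maps among $\tilde{\be}_1,\dots,\tilde{\be}_{n_\al}$ and $\widetilde{\ga\al_1\ga^{-1}},\dots,\widetilde{\ga\al_{n_\be}\ga^{-1}}$ that agree with $\tilde{\be}_1$ on some neighbourhood $U$ of $[\rho]$, say $r$ of the first kind and $d$ of the second, with the minimality property that no smaller neighbourhood makes any further map agree. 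The Claim — that $\rho(b_{ij})=0$ for all $i\le r$, $j>d$ — is proved exactly as before: the set $\{[\rho']\mid\rho'\in\S(B),\,\|(\rho-\rho')(b_{ij})\xi\|<\epsilon\}$ is open by the \emph{definition} of the topology on $\tilde{B}$, its intersection with $U$ is a nonempty neighbourhood of $[\rho]$ on which some $\widetilde{\ga\al_j\ga^{-1}}$ fails to agree with $\tilde{\be}_i$, so at a witnessing $[\rho']$ we get $\rho'(b_{ij})=0$ by point (c), whence $\|\rho(b_{ij})\xi\|=\|(\rho-\rho')(b_{ij})\xi\|<\epsilon$ for all $\xi$ and $\epsilon$. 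Assuming $r>d$ (exchanging $A$ and $B$ otherwise), performing Gaussian elimination on $[\rho(b_{ij})]$ as in Lemma~\ref{L:quasisim} produces a zero row at stage $d+1$, contradicting invertibility; hence $r=d$. A partition of the collection into germs, identical to the one in Theorem~\ref{mainthm}, then exhibits the open cover of $\tilde{B}$ indexed by $S_n$ that makes $\tilde{\ga}^{-1}\colon\tilde{A}\to\tilde{B}$ a piecewise conjugacy between $(\tilde{A},\tilde{\al})$ and $(\tilde{B},\tilde{\be})$.

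The only point requiring genuine (if brief) verification — and the one I would expect a careful reader to want spelled out — is that $[\rho]\mapsto[\rho\al]$ is well-defined and a homeomorphism on $\tilde{A}$: well-definedness and the homeomorphism property both follow because $\al$ is an automorphism, so $\rho\mapsto\rho\al$ is a bijection of $\S(A)$ onto itself intertwining $\ssim$ with itself and sending each basic open set $\{[\rho']\mid\|(\rho-\rho')(a)\xi\|<\epsilon\}$ onto $\{[\rho'']\mid\|(\rho\al-\rho'')(\al^{-1}(a))\xi\|<\epsilon\}$, which is again basic open. Everything else is a transcription, and I would simply remark that "a verbatim repetition of the proof of Theorem~\ref{mainthm}, with $\hat{\cdot}$ replaced by $\tilde{\cdot}$ and Proposition~\ref{open} replaced by the defining property of the topology on $\tilde{B}$, gives the result." $\hfill\qed$
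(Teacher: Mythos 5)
Your proposal is correct and follows exactly the paper's route: the paper's entire proof of Theorem~\ref{secondmainthm} is the sentence ``a verbatim repetition of the proof of Theorem~\ref{mainthm} (with hats replaced by tildes),'' and you have simply made explicit the checks that this substitution requires (the basic sets are open by definition of the topology on $\tilde{B}$, automorphisms induce homeomorphisms of $\tilde{A}$, and Lemma~\ref{L:key} applies since every $\rho\in\S(B)$ has $\rho(B)$ with trivial center). One caveat: your parenthetical ``strong inequivalence is weaker than, hence implied by, the failure of ordinary unitary equivalence'' has the logic pointing the wrong way for what you need --- here you \emph{start} from $\rho'\ga\al_j\ga^{-1}\not\ssim\rho'\be_i$, which is a weaker hypothesis than $\nsim$, so you cannot borrow the hat-version implication directly; the correct justification is that the nonzero alternative in Lemma~\ref{L:key} produces a scalar multiple of a unitary $w$ lying in $C=\rho'(B)$ itself and intertwining two representations with equal range $\rho'(B)$, so invertibility of $\rho'(b_{ij})$ would force \emph{strong} equivalence, whence strong inequivalence already forces $\rho'(b_{ij})=0$. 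With that repair the argument is exactly the paper's.
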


In certain cases Theorems~\ref{mainthm} and \ref{secondmainthm} provide a complete invariant for isometric isomorphism between tensor algebras, e.g., multivariable dynamics with two generators over commutative $\ca$-algebras \cite{DK}. Nevertheless, none of these results provides a complete invariant in general. This follows from the work of Davidson and Kakariadis \cite{DKak} and an example of Kadison and Ringrose \cite{KadR}.

\begin{example}
In \cite{KadR} Kadison and Ringrose show that there exists a (homogeneous) $\ca$-algebra $A$ and an automorphism $\al$ of  $A$ which is universally weakly inner but not inner. If the converse of Theorem~\ref{mainthm} were valid for tensor algebras, then $A \times_{\al} \bbZ^+$ and $A\times_\id \bbZ^+$ would be isomorphic and hence outer conjugate by \cite{DKak}. But this would imply that $\al$ is inner, a contradiction.
\end{example}

\section{Concluding Remarks and open problems}

One of the consequences of our theory is the \textit{invariance of dimension}:  if  $(A,\al)$ and $(B, \be)$ are multivariable systems consisting of $*$-automorphisms and $\alg(A,\al)$ and $\alg (B, \be)$ are isometrically isomorphic as operator algebras, then $n_{\al}= n_{\be}$. Furthermore the invariance of dimension is implicit in both the statements of piecewise conjugacy and outer conjugacy and therefore it is a corollary of both Theorem~\ref{mainthm} and Theorem~\ref{trvcent}. The following example shows that the invariance of dimension does not hold for arbitrary multivariable systems.

\begin{example}\label{E:counter}
Let $A=B=\O_2$, $\al=(\al_1, \al_2)$, with $\al_1=\al_2=\id$, and let
\[
\be(x) = S_1xS_1^* + S_2xS_2^*, \, \, x \in \O_2,
\]
where $S_1, S_2$ are the canonical generators of $\O_2$. Then the tensor algebras $\T^{+} (A, \al)$ and $\T^{+} (B ,\be) $ are (completely) isometrically isomorphic.

Indeed, $\be(x)S_1 = S_1x$ and $\be(x)S_2 = S_2 x$, for all $x\in \O_2$. Hence, the unitary matrix $U=\begin{bmatrix} S_1 & S_2 \end{bmatrix}$ intertwines $\{\be\}$ and $\{\al_1,\al_2\}$ and so $X_{(A, \al)}$ and $X_{(B, \be)}$ are unitarily equivalent.
\end{example}

The above example does not exclude the possibility that the $\ca$- correspondence is an isomorphism invariant for arbitrary multivariable systems.

\begin{question}
Let $(A,\al)$ and $(B,\be)$ be multivariable dynamical systems consisting of arbitrary $*$-endomorphisms. Assume that there exist associated operator algebras $\alg(A,\al)$ and  $\alg (B, \be)$ which are isometrically isomorphic. Does it follow that the $\ca$-correspondences $X_{(A,\al)}$ and $X_{(B,\be)}$ are unitarily equivalent?
\end{question}

Question~1 has a positive answer when both multivariable systems consists of $*$-epimorphisms. Indeed, Theorem~\ref{T:right inv} and its consequences are valid also in this case, with the same proofs. Because of Example~\ref{E:counter}, we emphasize the assumption that both families consist of $*$-epimorphisms.
Question 1 also has a positive answer in the following case.

\begin{theorem}\label{T:stably}
Items $(1)$ and $(2)$ of Theorem~\ref{T:iso gives mor} hold for arbitrary classical systems. 

The same holds for multivariable systems of stably finite $\ca$-algebras with arbitrary $*$-endomorphisms, provided that $ n_{\al} = n_{\be}$.
\end{theorem}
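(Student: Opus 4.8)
The plan is to reduce both assertions to a single observation that removes the automorphic hypothesis from Theorem~\ref{T:right inv}, at the cost of either stable finiteness or commutativity. Suppose first that $B$ is stably finite and that $n_\al=n_\be=:n$. By Proposition~\ref{P:right inv} the matrix $[b_{ij}]\in M_n(B)$ associated with an isometric isomorphism $\ga\colon\alg(A,\al)\to\alg(B,\be)$ is right invertible, and since $M_n(B)$ is a finite $\ca$-algebra a right invertible element of it is invertible; hence $[b_{ij}]$ is invertible in $M_n(B)$, which is exactly the conclusion of Theorem~\ref{T:right inv}. I would then rerun the polar decomposition argument from the proof of Theorem~\ref{T:iso gives mor}, spelling out why the unitary part intertwines: writing $[b_{ij}]=w\,|[b_{ij}]|$, the relations $\be_i(b)\,b_{ij}=b_{ij}\,\ga\al_j\ga^{-1}(b)$ together with their adjoints (using that the $\be_i,\al_j$ are $\ast$-homomorphisms) show that $[b_{ij}]^*[b_{ij}]$, and hence by functional calculus $|[b_{ij}]|$, commutes with each $\operatorname{diag}(\ga\al_1\ga^{-1}(b),\dots,\ga\al_n\ga^{-1}(b))$; cancelling the invertible factor $|[b_{ij}]|$ then yields that $w$ intertwines $\{\be_i\}_{i=1}^{n}$ and $\{\ga\al_j\ga^{-1}\}_{j=1}^{n}$, so $(\ga,w)$ implements a unitary equivalence $X_{(A,\al)}\simeq X_{(B,\be)}$. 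Crucially, this step uses nothing about the systems beyond the invertibility of $[b_{ij}]$ and the $\ast$-algebraic structure. This proves item~(1) for stably finite $B$ with $n_\al=n_\be$ — hence the second paragraph of the theorem — and the forward implication of item~(2); the reverse implication of~(2) is the standard fact, recalled in the proof of Theorem~\ref{T:iso gives mor}, that unitarily equivalent correspondences have completely isometrically isomorphic tensor algebras.

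For a classical system there remains only the dimension equality $n_\al=n_\be$, after which the paragraph above applies, since every commutative unital $\ca$-algebra is stably finite (an isometry in $M_n(C(Y))=C(Y,M_n(\bbC))$ is pointwise an isometry in the finite-dimensional $M_n(\bbC)$, hence pointwise a unitary, hence a unitary). To get $n_\al=n_\be$ I would apply Proposition~\ref{P:right inv} to both $\ga$ and $\ga^{-1}$ and use the abundance of characters. A right inverse $[x_{jk}]\in M_{n_\al,n_\be}(B)$ of $[b_{ij}]$ satisfies $[b_{ij}][x_{jk}]=I_{n_\be}$; applying any character of $B$ entrywise produces a right invertible $n_\be\times n_\al$ complex matrix, so $n_\be\le n_\al$. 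The same reasoning for the isometric isomorphism $\ga^{-1}\colon\alg(B,\be)\to\alg(A,\al)$, whose associated matrix lies in $M_{n_\al,n_\be}(A)$, together with a character of $A$, gives $n_\al\le n_\be$; hence $n_\al=n_\be$.

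I expect the conceptual obstacle to be the unavailability of Theorem~\ref{leftinvert} once the $\be_i$ are no longer $\ast$-epimorphisms: in the automorphic development it is precisely that theorem which upgrades right invertibility of $[b_{ij}]$ to squareness plus invertibility. The remedy is to supply that upgrade differently — finiteness of $M_n(B)$ when the matrix is already known to be square, and characters in the commutative case, a character composed with an arbitrary unital endomorphism being still onto $\bbC$ and so recovering the dimension count that Theorem~\ref{leftinvert} would otherwise provide. The one routine but indispensable check is the verification, sketched above, that the passage from invertibility of $[b_{ij}]$ to the intertwining unitary $w$ in the proof of Theorem~\ref{T:iso gives mor} uses only the $\ast$-algebraic structure of the systems and not surjectivity of the maps.
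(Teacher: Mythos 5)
Your proposal is correct and follows essentially the same route as the paper's (very terse) proof: Proposition~\ref{P:right inv} gives right invertibility of $[b_{ij}]$ without any automorphic hypothesis, characters force $n_\be\le n_\al$ (and equality by symmetry via $\ga^{-1}$) in the commutative case, and (stable) finiteness upgrades the square right-invertible matrix to an invertible one, after which the polar-decomposition argument of Theorem~\ref{T:iso gives mor} applies verbatim. The details you supply — that a right-invertible element of a finite $\ca$-algebra is invertible, and that $|[b_{ij}]|$ commutes with $\diag(\ga\al_j\ga^{-1}(b))$ so the unitary part intertwines — are exactly the steps the paper leaves implicit, and they check out.
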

\begin{proof}
Suppose that $[b_{ij}]$ is the matrix associated with the isomorphism $\ga \colon \alg(A,\al) \rightarrow \alg(B,\be)$. By Proposition~\ref{P:right inv} $[b_{ij}]$ is a right-invertible rectangular matrix. If $B$ is commutative then $n_\be \leq n_\al$ and by symmetry on $\ga^{-1}$ we obtain that $n_\be = n_\al$. The finiteness condition for $B$ implies that $[b_{ij}]$ is invertible.
\end{proof}

When the $\ca$-algebras are commutative we obtain that unitary equivalence implies piecewise conjugacy of the systems, by passing though the isomorphism of the tensor algebras \cite[Theorem 3.22]{DK}. In certain cases, piecewise conjugacy implies also isometric isomorphism of the tensor algebras \cite[Theorem 3.25]{DK}, thus in these cases piecewise conjugacy and unitary equivalence of the $\ca$-correspondences coincide.

\begin{question}
Does piecewise conjugacy imply unitary equivalence of the $\ca$-correspondences for classical dynamical systems in general?
\end{question}

We are also interested in piecewise conjugacy over the Jacobson spectra.

\begin{question}
Let $(A,\al)$ and $(B,\be)$ be multivariable dynamical systems and assume that there exist associated operator algebras $\alg(A,\al)$ and  $\alg (B, \be)$ which are isometrically isomorphic. Does it follow that the multivariable systems $(A,\al)$ and $(B,\be)$  are piecewise conjugate over their \textit{Jacobson} spectra?
\end{question}

\begin{acknow}
The second author would like to thank Ken Davidson for inviting him to visit the University of Waterloo during the summer of 2012. The initial steps in this investigation were taken during that visit.
\end{acknow}


\end{document}